\newtheorem{thm}{Theorem}[section]
\newtheorem{prop}[thm]{Proposition}
\newtheorem{lemma}[thm]{Lemma}
\theoremstyle{remark}
\newtheorem{example}[thm]{Example}
\newtheorem{defin}{Definition}
\def\C{\mathbb{C}}
\def\Q{\mathbb{Q}}
\def\Z{\mathbb{Z}}
\def\P{\mathbb{P}}
\def\F{\mathbb{F}}
\def\A{\mathbb{A}}
\def\G{\Gamma}
\def\eps{\varepsilon}
\title{Quadratic residue patterns, algebraic curves and a K3 surface}
\author[V. Kiritchenko]{Valentina Kiritchenko$^{1}$}
\address{$^1$HSE University,
Moscow, Russia}
\address{$^2$Higher School of Modern Mathematics MIPT, Russia}
\address{$^3$Aix Marseille Universit\'{e}, CNRS,  I2M UMR 7373, Marseille, France}
\address{$^4$University of California, Berkeley, USA}
\author[M. Tsfasman]{Michael Tsfasman$^{2}$}
\author[S. Vl\u{a}du\c{t}]{Serge Vl\u{a}du\c{t}$^{3}$}
\author[I. Zakharevich]{Ilya Zakharevich$^{4}$}
\thanks {The research of the second named author is supported by the MSHE project No. FSMG-2024-0048}
\email{vkiritch@hse.ru}
\email{mtsfasman@yandex.ru}
\email{sergevladuts@ya.ru}
\email{ilya@math.berkeley.edu}
\keywords{quadratic residues, elliptic curves, K3 surfaces}
\begin{document}

\dedicatory{\flushright To the memory of Lydia Goncharova}

\maketitle

\begin{abstract}Quadratic residue patterns modulo a prime are studied since 19th century. In the first part we extend existing results on the number of consecutive $\ell$-tuples of quadratic residues, studying corresponding algebraic curves and their  Jacobians, which happen to be products of  Jacobians of hyperelliptic curves. In the second part we state the last unpublished result of Lydia Goncharova on squares such that their differences are also squares, reformulate it in terms of algebraic geometry of a K3 surface, and prove it. The core of this theorem is an unexpected relation between the number of points on the K3 surface and that on a CM elliptic curve.
\end{abstract}

\section{Introduction}                
Patterns formed by quadratic residues and non-residues modulo a prime have been studied since the end of the XIX century \cite{A,St,J} and continue to attract attention of contemporary mathematicians \cite{C,MT}.
Even the most elementary questions about distributions of quadratic (non)residues lead to difficult problems and deep results of number theory.
For instance, given a positive integer $\ell$, does there exist a prime number $p$ and an $\ell$-tuple of consecutive integers between $1$ and $p-1$ such that the $\ell$-tuple consists only of quadratic non-residues (or only of residues) modulo $p$? How many such $\ell$-tuples are there for given $\ell$ and $p$ ? 
And how large is the least quadratic non-residue modulo $p$? Another possibility is, e.g., to ask about  the number of quadratic residues such that their  differences are also quadratic residues. 

In the first part of this note we study the following question. For a given $\ell$ and given $p$ can we calculate the number of $\ell$-tuples of consecutive quadratic residues? Is there an explicit formula for that? If not, can we describe its asymptotic behaviour for growing $p$? Classical results in this direction were proved using sums of Legendre symbols. Our approach is that of algebraic geometry. It permits us to reprove known results easily, to go further in $\ell$, and to obtain finer asymptotic results. Our luck is that corresponding complete intersections of quadrics are very specific, they have a basis consisting of quadrics of rank 3, and even stronger, the determinant variety of the corresponding net of quadrics is a union of hyperplanes. We derive that the  Jacobians of corresponding curves up to an isogeny are always products of  Jacobians of hyperelliptic curves (Theorem \ref{jac}), and in some first cases just of elliptic ones.

In the second part we formulate a new result (Theorem \ref{t.main}) on patterns of quadratic (non)residues obtained by our late friend Lydia Goncharova.
We recovered her result partly from her talk with the first author in December 2019, and partly from her older notes and e-mail correspondence.
We are deeply grateful to David Kazhdan who helped us to understand the construction of Subsection \ref{sec}, and to Alexei Skorobogatov for his interpretation of the proof given in Subsection \ref{alt}.
Most likely Goncharova knew an elementary proof of the result. Unfortunately, she had not written it down, and we were unable to restore it from her notes. However,  switching to algebraic geometry we have found another proof.

The geometric counterpart of this problem is also a complete intersection of quadrics, this time a surface. Just as in the first part, the defining 3-dimensional pencil of quadrics in $\P^5$  enjoys a basis of rank 3 quadrics, and our surface $S$ is not just a K3 surface, but a Kummer surface with a pencil of elliptic curves.
Point counting on $S$ over a finite field with $p$ elements reduces to point counting on 4 twists of an elliptic curve with complex multiplication.

We would like to thank the referee for the careful reading and useful comments.

\part{Consecutive quadratic residues and curves}

\section{Quadratic residue patterns and algebraic curves}\label{s.arithm}
Let $p$ be an odd prime.
Consider the sequence $1$, $2$, \ldots, $p-1$.
Replace every number $i$ in the sequence by the letter $R$ if $i$ is a quadratic residue modulo $p$, and by the letter $N$ otherwise.
Denote by $W_p$ the resulting word.
\begin{defin}
Let $S$ be a word of length $\ell\le{p-1}$ that contains only $R$ and $N$.
Define $n_p(S)$ as the number of sub-words of $W_p$ that coincide with $S$, and are formed by $\ell$ consecutive elements of $W_p$. The word $$\underbrace{R\ldots R}_{l}$$  
will be also denoted by $R^\ell$.
\end{defin}
\begin{example}
Let $p=17$.
Then $W_p=RRNRNNNRRNNNRNRR=R^2NRN^3R^2N^3RNR^2$.
If $\ell=3$, then $n_p(S)=2$ for all words $S$ of length $3$ except for $S= R^3$, and $n_p(R^3)=0$.
\end{example}

Below we will concentrate mainly on the behaviour of  $n_p(R^\ell)$. For other words $S$ of length $\ell$ the problem is similar, and we mostly leave it for future investigators.
\subsection{Early history}
Note that if $\ell=1$, then $n_p(S)=\frac{p-1}2$.
Indeed, $n_p(R)=n_p(N)$, i.e., the number of quadratic residues is equal to the number of non-residues, since the multiplicative group $\F_p^*$ is cyclic of even order, and $n_p(R)+n_p(N)=p-1$. This is known at least since P.~Fermat.

First works known to us dedicated to $\ell\ge 2$ date back to the end of the XIX century. Namely, 
Aladov's paper \cite{A} gives the answer for $\ell=2:$ 
$$n_p(RR)=\frac{p-5}{4},\;n_p(RN)= n_p(NR)= n_p(NN)=\frac{p-1}{4}, \hbox{ if } p=4k+1,$$
$$n_p(RN)=\frac{p+1}{4},\;n_p(RR)= n_p(NR)= n_p(NN)=\frac{p-3}{4}, \hbox{ if } p=4k+3.$$
The proof is completely elementary.
Note that these formulae can be rewritten as 
$$n_p(RR)=\frac{p-4-\genfrac(){0.5pt}{0}{-1}{p}}{4}, \quad n_p(RN)= \frac{p-\genfrac(){0.5pt}{0}{-1}{p}}{4},$$
$$\;n_p(NN)= n_p(NR)=\frac{p-2-\genfrac(){0.5pt}{0}{-1}{p}}{4}$$
for all (odd) $p$. Here and below  $\genfrac(){0.5pt}{0}{a}{p}$ is the Legendre symbol.
 
For $\ell=3$  there are simple explicit formulas for certain linear combinations such as $n_p(RRR)+n_p(NNN)$ \cite{A, St}. Note that those papers treat only the most elementary cases when all answers are certain simple affine-linear functions of $p$.

Jacobstahl's 1906 thesis \cite{J} (citing \cite{St}) explores the case $\ell=3$, which is much trickier.
However, for $p=4k+3$ (and $\ell=3$), the formulas for $n_p(S)$ are quite close to the above ones. Namely, \cite[Part III, Formula I]{J}:
$$n_p(RRR)=n_p(NNN)= n_p(NRR)= n_p(NNR)=\frac{p-3-2\genfrac(){0.5pt}{0}{2}{p}}{8},$$
$$n_p(RRN)= n_p(NRN)= n_p(RNR)=  n_p(RNN)= \frac{p-1+2\genfrac(){0.5pt}{0}{2}{p}}{8}.$$
On the contrary, for $p=4k+1$  the formulas for $n_p(S)$ involve  more complicated ingredients.
Namely, define  $J(k)\in 2\Z$ by  
$$a:=a(p):=J(k)=\sum_{i=1}^{4k-2}\genfrac(){0.5pt}{0}{i(i+1)(i+2)}{p}.$$
Then \cite[Part III, Formula II]{J}:
$$n_p(RRN)= n_p(NRR)= n_p(RNR)=n_p(NNN)= \frac{p-5}{8}-\frac{a}{8},$$
$$n_p(RNN)= n_p(NNR) =\frac{p+1}{8}+\frac{a}{8}, \; $$
$$n_p(RRR)= \frac18\left(p-11-4\genfrac(){0.5pt}{0}{2}{p}\right)+\frac{a}{8},$$
$$n_p(NRN)= \frac18\left(p-3+4\genfrac(){0.5pt}{0}{2}{p}\right) +\frac{a}{8}.$$
 Moreover, one has \cite[Part II]{J}:
 $$p=\frac{a(p)^2}{4} +\frac{b(p)^2}{4}$$
 for
 $$b:=b(p):= \sum_{i=1}^{p}\genfrac(){0.5pt}{0}{i(i^2+s) }{p}, \hbox{ where }  \genfrac(){0.5pt}{0}{c }{p}=0
 \hbox{ for }   p|c,$$
 where $s$ is any non-residue.
This is closely related to the so-called ``Gauss' Last Entry'' (see  Subsection 2.3 below and \cite{M} ).

After Jacobstahl, there was no progress in establishing 
explicit formulas for larger $\ell$ (which is quite natural, see below the end of Subsection \ref{cur}), and during 1920-1930 there were many papers giving different estimations of $n_p(R^\ell),$ in particular, guaranteeing its (strict) positiveness.  All those numerous results are easy consequences of Theorem \ref{est} below.

\subsection{A general formula}
Let then $\ell\ge 1$ be arbitrary,  and let $p\ge 3$ be a prime. 
We still assume that $\ell<p$.
For a word $S=S_1\ldots S_\ell$ as above, define $\varepsilon_i=\varepsilon(S_i)$ by setting $\varepsilon(R)=1$ and $\varepsilon(N)=-1$ for $1\le i\le \ell$.  
Following exposition in \cite{C}, we now recall the classical formula:

\begin{equation}\label{con} 
   n_p(S)=\sum_{j=1}^{p-\ell-1}\prod_{i=1}^{\ell}\frac12\left(1+\eps_i\genfrac(){0.5pt}{0}{i+j -1}{p}\right),   
\end{equation}
which generalizes the above Jacobsthal's formula, see \cite[p.223]{GL} for historical details. 
We are grateful to Maxim Korolev for telling us about this reference.
In particular, formula \ref{con} was used in \cite{Mo} to get the following result:
\begin{thm}\label{est}
    For any $S$ and $p>\ell$ one has
    $$\left|n_p(S)-2^{-\ell}p\right|<(\ell-1)\sqrt{p}+\ell/2.$$
\end{thm}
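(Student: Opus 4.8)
The plan is to read off the error directly from the completed version of formula \eqref{con} and then feed it into Weil's bound for character sums. Write $\chi=\genfrac(){0.5pt}{0}{\cdot}{p}$ for the Legendre symbol and set
$$P(x)=\prod_{i=1}^{\ell}\tfrac12\bigl(1+\eps_i\chi(x+i-1)\bigr),\qquad x\in\F_p,$$
so that $0\le P(x)\le 1$ and, by \eqref{con}, $n_p(S)=\sum_{j}P(j)$ over the stated range. First I would replace this incomplete sum by the complete one $\widetilde{n}:=\sum_{x\in\F_p}P(x)$, keeping track of the difference; this is the step that produces the additive constant $\ell/2$. Only afterwards would I expand $\widetilde{n}$ into complete character sums and estimate them one by one --- the point being that the completion must be carried out on the product $P$ and \emph{not} termwise after expansion, since a termwise completion would re-incur a boundary loss for each of the $2^\ell$ subsets.

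For the completion, observe that a factor of $P$ equals $\tfrac12$ exactly at the $\ell$ residues $x\equiv 1-i\ (1\le i\le\ell)$, which are distinct modulo $p$ because $\ell<p$; at each of them $P(x)\le\tfrac12$, while at every other $x$ one has $P(x)\in\{0,1\}$, equal to the indicator of the prescribed pattern. Hence passing from $n_p(S)$ to $\widetilde{n}$ adjoins these $\ell$ half-points together with at most one further boundary residue, so $0\le\widetilde{n}-n_p(S)\le \ell/2+1$. This is the origin of the term $\ell/2$ in the statement; the stray $+1$ will be absorbed by the slack in the coefficient of $\sqrt p$ obtained below. This range bookkeeping is elementary, but it is the fiddliest part, and it is where one must use the sharper bound $P(x)\le\tfrac12$ at the vanishing points rather than the trivial $P(x)\le 1$.

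Now expand, using the multiplicativity of $\chi$:
$$\widetilde{n}=2^{-\ell}\sum_{T\subseteq\{1,\dots,\ell\}}\Bigl(\prod_{i\in T}\eps_i\Bigr)\sum_{x\in\F_p}\chi\bigl(f_T(x)\bigr),\qquad f_T(x)=\prod_{i\in T}(x+i-1).$$
The term $T=\varnothing$ gives the main term $2^{-\ell}p$. For nonempty $T$ the polynomial $f_T$ has $|T|$ distinct roots $1-i$ (again distinct modulo $p$ since $\ell<p$), hence is squarefree of positive degree and so is not a scalar multiple of a square; Weil's bound for character sums then gives $\bigl|\sum_{x\in\F_p}\chi(f_T(x))\bigr|\le(|T|-1)\sqrt p$. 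Summing over $T$ and using $\sum_{k=1}^{\ell}\binom{\ell}{k}(k-1)=\ell 2^{\ell-1}-2^{\ell}+1$ yields
$$\bigl|\widetilde{n}-2^{-\ell}p\bigr|\le\Bigl(\tfrac{\ell}{2}-1+2^{-\ell}\Bigr)\sqrt p\le(\ell-1)\sqrt p .$$
Combining with the completion estimate gives $\bigl|n_p(S)-2^{-\ell}p\bigr|\le\bigl(\tfrac{\ell}{2}-1+2^{-\ell}\bigr)\sqrt p+\tfrac{\ell}{2}+1$, and since $(\ell-1)-\bigl(\tfrac{\ell}{2}-1+2^{-\ell}\bigr)=\tfrac{\ell}{2}-2^{-\ell}$ makes $\bigl(\tfrac{\ell}{2}-2^{-\ell}\bigr)\sqrt p>1$ for $\ell\ge 2$ and $p>\ell$, the stated bound $\bigl|n_p(S)-2^{-\ell}p\bigr|<(\ell-1)\sqrt p+\ell/2$ follows, strictly.

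I expect the only genuine analytic input to be Weil's bound; everything else is elementary. The main obstacle is therefore purely the bookkeeping of the last paragraph: verifying the hypotheses of Weil's theorem for each $f_T$ (squarefreeness, hence non-squareness), and controlling the loss in passing between the incomplete and the complete sum by $\ell/2$ rather than by the trivial $\ell+1$. It is worth noting that the method in fact delivers the smaller coefficient $\tfrac{\ell}{2}-1+2^{-\ell}$ in front of $\sqrt p$, so the clean constant $\ell-1$ in the statement holds comfortably; this same slack supplies the strict inequality and handles the small values of $\ell$.
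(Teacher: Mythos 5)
Your argument is correct and is essentially the paper's own proof: the paper (attributing the theorem to Moroz) only sketches it as ``developing the products in \eqref{con} and applying the Weil bound,'' which is exactly what you do, and your refinement of completing the sum on the product $P$ \emph{before} expanding (rather than termwise afterwards) is precisely the bookkeeping needed to make that sketch deliver the stated constants $(\ell-1)\sqrt p$ and $\ell/2$. One caveat: for $\ell=1$ the strict inequality in the statement actually fails (there $n_p(S)=\tfrac{p-1}{2}$, so both sides equal $\tfrac12$), hence your final verification, valid for $\ell\ge 2$, covers every case in which the theorem is true, and your closing remark that the slack ``handles the small values of $\ell$'' should be qualified to exclude $\ell=1$.
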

The proof consists in developing the products in \eqref{con} and applying the  {\em Weil bound} \cite{W} which says that for a polynomial $f\in \F_p$ we have
$$ \left|\sum_{j\in \F_p}  \genfrac(){0.5pt}{0}{f(j)}{p} \right|\le (\deg f-1)\sqrt{p}. $$
In its turn, the  Weil bound is a consequence of the Riemann Hypothesis over finite fields, proved also by Andr\'e Weil in 1940-1941.

That bound for $n_p(S)$ is optimal concerning the power of $p$, but can be sometimes ameliorated for the coefficient at $\sqrt{p}$, see Subsection \ref{cur} below.

In any case \eqref{con} shows that $n_p(S)$ is controlled by some (hyperelliptic) curves and we are going to make them explicit. For simplicity of notation we suppose that $S=R^\ell$, but the same is true {\em mutatis mutandis} for any $S$.
However, below until the  end of Section 2 we suppose that $S=R^\ell$.

\subsection{Curves controlling  $n_p(R^\ell)$}\label{cur}
Since all $\eps_i=1$, for $i=1, \ldots, \ell$, we can re-write \eqref{con} as follows:

$$ n_p(S)=\sum_{j=1}^{p-\ell-1}\prod_{i=1}^{\ell}\frac12\left(1+\genfrac(){0.5pt}{0}{i+j -1}{p}\right)=  $$
 $$=2^{-\ell} \sum_{j=1}^{p-\ell-1}\left(1+\sum_{t=1}^{\ell}\sum_{\substack{T\subseteq[1,l]\\ |T|=t}}\prod_{i\in T}\genfrac(){0.5pt}{0}{i+j -1}{p}\right) = $$ 
 $$=2^{-\ell}  \left(p +\sum_{t=1}^{\ell}\sum_{\substack{T\subseteq[1,l]\\ |T|=t}}\sum_{j=1}^{p}\prod_{i\in T}\genfrac(){0.5pt}{0}{i+j -1}{p}\right) +c_p(\ell)= $$
  $$=2^{-\ell}  \left(p +\sum_{t=1}^{\ell}\sum_{\substack{T\subseteq[1,l]\\ |T|=t}}\sum_{j\in \F_p}\prod_{i\in T}\genfrac(){0.5pt}{0}{i+j -1}{p}\right) +c_p(\ell)= $$
   $$=2^{-\ell}  p +2^{-\ell}\sum_{t=1}^{\ell}\sum_{\substack{T\subseteq[1,l]\\ |T|=t}}N_T +c_p(\ell)$$ 
 for a certain constant $c_p(\ell)\in 2^{-\ell} \Z$ which we will ignore since $c_p(\ell)$ for all $p$ is bounded by a constant $c(\ell)$, which depends only on $\ell$. 
 Note, however, that, if necessary,
 $c_p(\ell)$ can easily be  calculated explicitly. Here we have defined 
 $$N_T:=\sum_{j\in \F_p}\prod_{i\in T}\genfrac(){0.5pt}{0}{i+j -1}{p}=\sum_{j\in\F_p}\genfrac(){0.5pt}{0}{f_T(j)}{p}\in \Z,$$
 for
 $$f_T(X):=\prod_{(i+1)\in T}(X+i)\in\Z[X].$$

 Let then $C_T$ be a hyper-elliptic curve of genus $g_T=g(C_T)$
 $$C_T: y^2=f_T(x), \quad g(C_T)=\lfloor  (t-1)/2\rfloor, \quad t=|T|. $$
 Then it is clear that $N_T=|C_T(\F_p)|-p$, where $|C_T(\F_p)|$ is the number of solutions of $y^2=f_T(x)$ in $\F_p^2$,
 that is, the number of the affine points of $C_T$ (i.e., the total number of points $|\overline{C_T}(\F_p)|$ minus 2 or 0 for an even $t$, and minus 1 for an odd $t$, $\overline{C_T}$ being the smooth projective closure of  $C_T$). Recall, that Weil's bound, i.e. the Riemann hypothesis in characteristic $p$ says that
 $$|  |\overline{C_T}(\F_p)|-p-1|\le 2g_T\sqrt p,$$
 and this implies exactly the estimate of Theorem {\ref{est}}. Recall also that the  number $a_T=p+1-  |\overline{C_T}(\F_p)|$ is called the {\em trace of Frobenius} and it is the trace of the Frobenius operator acting on the group of \'etale $\ell$-adic cohomologies  $H^1(\overline{C_T}, \Q_\ell)$ or its inverse acting on the Tate module $T_\ell(\overline{J_T})$ of the $\overline{C_T}$'s  Jacobian $\overline{J_T}$. Therefore, the $2^\ell-1$ curves $C_T$ control $n_p(R^\ell)$ and also all $n_p(S)$ of length $\ell$.

In fact, there are natural maps of the curve $C_\ell$ of genus $2^{\ell-2}(\ell-3)+1$ corresponding to $n_p(R^\ell)$ onto each $C_T$, and its  Jacobian $J(C_\ell)$ splits up to an isogeny into the product of  Jacobians $J(C_T)$. These maps are given below in the proof of Theorem \ref{jac}.
 
Let us look at small values of $\ell$.

\medskip

$\bf{\ell=2.}$ Here we have three sets $T$, namely, $T_1=\{1\}$, $T_2=\{2\}$, $T_3=\{1,2\}$. 
The corresponding curves
are rational:
$$C_1:y^2=x, \quad C_2:y^2=x+1, \quad C_{1,2}:y^2=x(x+1),$$
hence all traces are zero and one needs only to  calculate $c_p(2)$, which leads to Aladov's result.

\medskip

$\bf{\ell=3.}$ Here we have three possible values of $k=|T|=1,2,3$ and seven sets $T_i$, 3 with  1 element, 3  with 2  elements and the last $T_7=\{1,2,3\}.$ The first 6 give rational curves (conics) and the last one gives  an elliptic curve
$E:y^2=x(x+1)(x+2)$ isomorphic (over $\Z$ and thus over $\F_p$, $p\ge 5$) to $E_0:y^2=x^3-x.$

In particular, Jacobstahl's calculation of the sum 
$$ \sum_{a\in\F_p}\genfrac(){0.5pt}{0}{x(x+1)(x+2) }{p}$$
shows that $ a_0(p)=a_0=J(k)$, $a_0$ being the trace of $E_0$ if $p=4k+1$ (and that $a_0(p)=0$ for $p=4k+3$). 

\medskip

{\bf\em{Remarks. 1.}} There is an interesting historical connection with the famous  Last Entry (of 14.07.1814) in Gauss' {\em Tagebuch}
which says that he had found ``inductively'' the following fact:\smallskip

{\em Let $p$ be a prime  $\equiv  1 \mod 4,$ then the number of solutions to
$$x^2 + y^2 + x^2y^2 \equiv  1 \mod p $$
is $p + 1 - 2a,$
where $p = a^2 + b^2$, and $a$ is odd.}\smallskip

Note that: (1) the sign of $a$ is to be chosen ``appropriately'' ($a-1$ should be divisible by $2+2i$ in the ring of the Gauss integers  $\Z[i]$), and\smallskip

(2) there are four points at infinity included in the solution set.\medskip

The ``lemniscatic'' curve $x^2 + y^2 + x^2y^2=1$ is (birationally) isomorphic to  $E_0$ over $\Q(i)$ and thus over  $\F_p$ with  $p=4k+1$ (it is the Edwards form \cite{E} of $E_0$ over $\Q(i)$ and is singular).  

Therefore, Jacobstahl's proof is essentially the first proof of Gauss' Last Entry theorem, but this was not noted at the time and the first recognized proof was published some 15 years later \cite{Herg}; it uses completely different methods of Complex Multiplication (i.e., the explicit Class field theory of complex quadratic fields). The reader can consult \cite{M},
\cite[Section 3.1]{V},\cite[Section 3.4]{Ro} for some interesting history perspectives  of Gauss' Last Entry, many of them being based on the equality $\mathrm{End}_{\Q(i)}(E_0)=\Z[i]$, $i^2=-1$ i.e., on $E_0$ being a CM (complex multiplication) curve, with  $\mathrm{End}_{ \C}(E_0)=\mathrm{End}_{\overline{\Q}}(E_0)\ne\Z$. In fact, if we consider the natural action of $\Gamma_p:=\mathrm{Gal}(\overline{\F_p}/\F_p)$ on $E_0$, then the image in $\mathrm{End}_{\F_p}(E_0)=\Z[i]$ of the $p$-Frobenius element in $ \Gamma_p$, $\mathrm{Frob}_p:(x,y)\mapsto (x^p,y^p)$,  is just $\omega=a+bi.$\smallskip

{\bf\em{2.}} Thus, $a_0(p)=J(k)$ verifies the relation $J(k)^2+4b^2=4p$ which can be easily expressed as a simple quadratic relation between  $n_p(R^3), b(p)$ and $p.$   One then  naturally asks: is there a hope to get  similar  relations for $n_p(R^\ell), \ell\ge4$? 

The answer is rather  "no"  and we now briefly explain why. As we will see below, $n_p(R^\ell), \ell\ge4$ depends not only on $a_0(p),$  but also on the traces of some elliptic curves without CM.

One notes that for a CM  elliptic curve $E$  the 2-dimensional $\ell$-adic representation $\psi_\ell$ ($\ell $ being an arbitrary prime) of the  Galois group $\Gamma=\mathrm{Gal}(\overline{\Q}/\mathrm{k} )$ for $\mathrm{k}=\mathrm{End}(E)\otimes\Q$
$$ \psi_\ell:\Gamma\to \mathrm{End}(V_\ell(E)),\;V_\ell :=T_\ell(E)\otimes\Q  $$
given by the action on the points of $\ell$-primary torsion of $E$ has an Abelian image which permits to apply the    Complex Multiplication theory, to calculate the Frobenius trace for  a prime $p\ne \ell $ and thus to obtain formulas close to those of Jacobstahl.

On the other hand, for an elliptic curve without CM this image is almost always as large as possible, and thus,  non-Abelian and  even not solvable  which prevents to get  formulas similar to those by Jacobstahl.

\bigskip

 {\bf $\ell$=4.} Here we get 5 non-rational curves,  all elliptic (for $|T|=3$ or $4$):
$$E_0: y^2=x(x+1)(x+2); \quad E_1: \; y^2=x(x+1)(x+3); \quad E_2: \; y^2=x(x+2)(x+3);$$
$$E_3: \; y^2=(x+1)(x+2)(x+3);\quad E_4: \; y^2=x(x+1)(x+2)(x+3).$$
Curves $E_0$ and $E_3$ are isomorphic over $\Q$, and curves
 $E_1$ and $E_2$ are isomorphic over $\Q(i)$, but not over $\Q$,
while $E_0$ and $E_3$ are CM curves, $E_1, E_2$, and $E_4$  are not.

For $p=4k+3$ the curve $E_0$ is {\em supersingular,} i.e., ${a_0(p)}=0$. On the other hand,
since $-1$ is a non-residue in $\F_p$, the curve $E_2$ is a (non-trivial) quadratic twist of $E_1$ 
which implies $a_1(p)+a_2(p)=0.$ Indeed, the non-trivial element $ \sigma\in \mathrm{Gal}(\F_{p^2}/\F_p)$ acts as $-1$ on the Tate
module of $E_1$ and trivially on that of $E_2$. 

Therefore,
\begin{equation}\label{p30}n_p(R^4)=\frac{p}{16}-\frac{a_4(p)}{16} +c_p(4) \;\mbox{for}\; p=4k+3, \end{equation}
\begin{equation}\label{p10}n_p(R^4)=\frac{p}{16}-\frac{a_0(p)}8-\frac{a_1(p)}8-\frac{a_4(p)}{16}+c_p(4) \;\mbox{for}\; p=4k+1, \end{equation}
where $a_i(p)$ is the Frobenius trace of $E_i$ over $\F_p$; in particular, $a_0(p)=J(k)$.
Again, the function $c_p(4)$ of $p$ can be computed explicitly, and is bounded by an absolute constant $c(4)$.

Note that since by the Riemann Hypothesis, $|a_i(p)|<2\sqrt p$ we
get \begin{equation}\label{p3}
    \left|n_p(R^4)-\frac{p}{16}\right|\le \frac{\sqrt p}8+c(4)  \mbox{ for } p=4k+3;\;\end{equation}
    
 \begin{equation}\label{p1}\left|n_p(R^4)-\frac{p}{16}\right|\le \frac{5\sqrt p}8+c(4) \mbox{ for } p=4k+1,\end{equation}
 which is  a certain amelioration of the general estimate. Moreover, as we will see below (Subsection \ref{sta})
those estimates are {\em tight} in a very precise sense.

\subsection{Geometric properties of the  Jacobians}\label{gem}
Let the curve $C_\ell$ be  the following intersection of $\ell-1$ quadrics:
$$x_2^2-x_1^2=1, \quad x_3^2-x_2^2=1, \ldots, \quad x_\ell^2-x_{\ell-1}^2=1$$
and  $C_{p,\ell}\subset\overline{\F}_p^\ell$ be its reduction modulo $p$. By a slight abuse of notation we shall often write $C_{\ell}$ instead of  $C_{p,\ell} $. 
Then every point $(x_1,\ldots,x_\ell)\in C_\ell$ with non-zero coordinates produces the $\ell$-tuple
$(x_1^2,x_2^2,\ldots, x_\ell^2)$ of consecutive quadratic residues modulo $p$.
Taking into account that $x_i$ and $-x_i$ produce the same quadratic residue $x_i^2$ we get 
$$n_p(R^\ell)= 2^{-\ell}{|C_\ell^\circ(\F_p)|},$$
where $C_\ell^\circ\subset C_\ell$ consists of points whose coordinates are non-zero. Therefore,
the curve $C_{\ell}$ controls $n_p(R^\ell)$.
By the adjunction formula, its genus $g_\ell$   is equal to $2^{\ell-2}(\ell-3)+1$.
In particular, $g_2=0,g_3=1,g_4=5$, etc. Fortunately, $g_\ell$ equals the sum of all genera $g_T=g(C_T),T\subseteq[1,\ell],$ which can be written as 
$$g_\ell=2^{\ell-2}(\ell-3)+1=\sum_{j\ge 3}^\ell\left(j-3+\frac12\left(1+(-1)^j) \right)\right) {\ell\choose j}=\sum_{T\subseteq[1,\ell]}g_T.$$
It is not  difficult to prove this equality by induction   for any $\ell\ge 3.$
 
Since the Frobenius trace of $C_{p,\ell}$ coincides (in view of the last subsection) with the sum of those on all curves 
$C_T$, it is natural to suppose that the  Jacobian $J_\ell=J(C_\ell)$ is isogenous to the product of all  Jacobians   $J_T=J(C_T)$. It is not immediately clear, since we get the coincidence of traces only over $\F_p,$
 and not over all its (finite) extensions. However, this is true and our nearest goal is to give a   construction implying this decomposition.  
 
 \begin{thm}\label{jac} The  Jacobian $J_\ell=J(C_\ell)$ is isogenous (over $\Q$) to the product of elliptic and hyperelliptic  Jacobians 
$J_T=J(C_T)$ for all $T\subseteq [1, \ell], \mathrm{card} (T)\ge 3$.
  \end{thm}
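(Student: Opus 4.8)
The plan is to produce the claimed isogeny by an explicit algebraic correspondence, rather than inferring it from the mere coincidence of Frobenius traces over $\F_p$ (which, as already noted, is a priori too weak). For each $T\subseteq[1,\ell]$ I would write down the morphism
$$\phi_T\colon C_\ell\to C_T,\qquad (x_1,\dots,x_\ell)\mapsto\Big(x_1^2,\ \prod_{j\in T}x_j\Big),$$
which is defined over $\Q$ (indeed over $\Z$) and extends to a morphism of the smooth projective models, since any rational map from a smooth curve to a projective variety is a morphism. One checks it lands on $C_T$: setting $x=x_1^2$ gives $x+j-1=x_j^2$, so $\big(\prod_{j\in T}x_j\big)^2=\prod_{j\in T}(x+j-1)=f_T(x)$. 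As $\phi_T$ is non-constant, in characteristic $0$ it is finite and separable, so $\phi_T^*$ is injective on global $1$-forms.

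Next I would compute the pullback on the standard holomorphic differentials of the hyperelliptic curve $C_T\colon y^2=f_T(x)$, namely $\omega_k=x^k\,dx/y$ for $0\le k\le g_T-1$. With $x=u:=x_1^2$ and $y=\prod_{j\in T}x_j$ one gets
$$\phi_T^*\omega_k=\frac{u^k\,du}{\prod_{j\in T}x_j},$$
and these $g_T$ forms are holomorphic on $C_\ell$ (pullback of holomorphic forms under a finite morphism of smooth curves has no poles) and linearly independent. The decisive structural remark is that the projection $C_\ell\to\P^1$, $(x_1,\dots,x_\ell)\mapsto u$, is a connected $(\Z/2\Z)^\ell$-Galois cover, the group acting by the sign changes $x_j\mapsto\pm x_j$ (connectedness because $u,u+1,\dots,u+\ell-1$ are independent modulo squares in $\Q(u)^\ast$). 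Its characters are $\chi_S(\eps)=\prod_{j\in S}\eps_j$ for $S\subseteq[1,\ell]$, and under $x_j\mapsto\eps_jx_j$ the form $\phi_T^*\omega_k$ is multiplied by $\chi_T(\eps)$. Hence $\phi_T^*H^0(C_T,\Omega^1)$ lies in the $\chi_T$-isotypic component of $H^0(C_\ell,\Omega^1)$.

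Since distinct $T$ give distinct characters $\chi_T$, the isotypic decomposition shows that the subspaces $\phi_T^*H^0(C_T,\Omega^1)$ for different $T$ are in direct sum, so the combined map
$$\Phi^*\colon\bigoplus_{\mathrm{card}(T)\ge3}H^0(C_T,\Omega^1)\longrightarrow H^0(C_\ell,\Omega^1),\qquad (\omega_T)_T\mapsto\sum_T\phi_T^*\omega_T,$$
is injective (the summands with $\mathrm{card}(T)\le2$ vanish, as those $C_T$ are rational). Its source has dimension $\sum_Tg_T$, which by the genus identity $g_\ell=\sum_Tg_T$ recalled above equals $\dim H^0(C_\ell,\Omega^1)=g_\ell$; therefore $\Phi^*$ is an isomorphism. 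I would then interpret $\Phi^*$ as the cotangent map at the origin of the Albanese homomorphism $\Phi=\big((\phi_T)_*\big)_T\colon J_\ell\to\prod_{\mathrm{card}(T)\ge3}J_T$, using that the differential of $(\phi_T)_*$ on cotangent spaces is exactly $\phi_T^*$. Both sides are abelian varieties of the same dimension $g_\ell$ and $\Phi$ has bijective differential, so $\Phi$ is an isogeny, defined over $\Q$ since all $\phi_T$ are.

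The part requiring genuine care is the passage from the numerical trace coincidence to an honest map of varieties: concretely, one must be sure that the pulled-back forms are holomorphic on the smooth model and that the $\chi_T$-isotypic piece has the expected dimension $g_T$. I expect the Galois-cover/character organisation to be the decisive move: it replaces any delicate differential-by-differential holomorphy and independence computation by character orthogonality, leaving only the already-established genus equality to do the final bookkeeping.
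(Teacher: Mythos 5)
Your proposal is correct and follows essentially the same route as the paper: the same morphisms $\phi_T\colon(x_1,\dots,x_\ell)\mapsto\bigl(x_1^2,\prod_{j\in T}x_j\bigr)$, the same pullback computation of the forms $x^k\,dx/y_T$, and the same dimension count $g_\ell=\sum_T g_T$ to conclude that the product map of Jacobians has surjective (equivalently, injective cotangent) differential and hence is an isogeny. The only difference is that where the paper simply asserts the linear independence of the pulled-back forms for distinct $T$, you justify it via the $(\Z/2\Z)^\ell$-action and its character-isotypic decomposition, which is a clean way to fill in that step.
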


  {\em Proof.} First we rewrite the equations defining $C_\ell$ as 
$$x_2^2-x_1^2=1, \quad x_3^2-x_1^2=2, \ldots, \quad x_\ell^2-x_1^2=\ell-1. $$
Let $T\subseteq[1,\ell]$, then we set 
$$ x:=x_1^2, \quad y_T:=\prod_{j\in T}x_j.$$
The map
$$\phi_T: (x_1,x_2,\ldots,x_l)\mapsto (x,y_T)$$
defines a surjective morphism $\phi_T: C_\ell\longrightarrow C_T$. 
When $\mathrm{card} (T)\ge 3$, the genus $g_T := g(C_T)\ge 1$, and $\phi_T$ gives us a map of  Jacobians $\tilde{\phi}_T :J_{C_\ell}\longrightarrow J_{C_T}$. 

Since one has
$$\dim J(C_\ell)=g_\ell= \sum_{T\subseteq[1,\ell]} g_T,$$
if we prove that the product of maps $\tilde{\phi}=\prod_{T\subseteq[1,\ell], \mathrm{card} (T)\ge 3}\tilde{\phi}_T$ is a surjective map of  Jacobians, then  it is an isogeny.

Indeed, the space of regular differential forms on $C_T$ is generated by $x^i dx/y_T,$ $ i=0, \dots, g_T - 1$. Hence
$$\phi_T^*\left(\frac{x^i dx}{y_T}\right)= \frac{2x_1^{2i+1}dx_1}{\prod_{j\in T}x_j }$$ are regular forms on $C_\ell$. For different $T$ these forms are linearly independent, and their total number is $g_{C_\ell}$. Therefore, the differential of $\tilde{\phi}$ at zero is surjective, hence the map $\tilde{\phi}$ is dominant, and we are done. 
 
\qed

  For example, for $\ell=5$ we have in the notation of Subsection \ref{cur} :
  $$\;C_{\{1,2,3\}}=E_0,\;
  C_{\{1,2,4\}}=E_1,\;C_{\{1,3,4\}}=E_2,\;C_{\{2,3,4\}}=E_3, \;  C_{\{1,2,3,4\}}=E_4.$$\medskip

{\bf\em Remark.} There is another, more geometric way to construct the maps $\phi_T$ when $C_T$ is elliptic.
We consider the projective closure $\overline{C}_\ell\subset \mathbb{P}^\ell$ of  $C_\ell$
given by
$$x_2^2-x_1^2=x_0^2, \quad x_3^2-x_1^2=2x_0^2, \quad\ldots, \quad x_\ell^2-x_1^2=(\ell-1)x_0^2. $$

It is the intersection of $(\ell-1)$ quadrics $Q_i$, $i=0,1,\ldots,\ell-2$ all of  rank 3. Consider the corresponding linear system of quadrics,
$$Q_z=\sum_{i=0}^{\ell-2}z_iQ_i,\; z\in \mathbb{P}^{\ell-2}.$$
The singularity of $Q_z$ is equivalent to the singularity of the $(l+1)\times(l+1)$-matrix $M(Q_z)$ of the corresponding quadratic form, i.e., to the equation $\det M(Q_z)=0$. 
This gives a degree $(\ell+1)$ hypersurface $H_\ell\subset \mathbb{P}^{\ell-2}$ called the {\it determinant variety} of the system.  
Since the matrices $M(Q_i)$ are all diagonal, this hypersurface splits into a union of $(\ell+1)$ hyperplanes. 

In particular, for $\ell=4$, the curve $H_4\subset \mathbb{P}^{2}$ is just a union of 5 lines in the plane. Intersections of these lines correspond to quadrics of rank 3 in the web $Q_z$, while all other points on the lines correspond to those of rank 4, i.e., to quadrics that are cones over non-degenerate quadric surfaces in $\P^3$. The vertex of any such cone depends only on the choice of the line $l$ in $H_4$, and not on the point on $l$. Thus the intersection of quadrics corresponding to points on $l$ can be projected to the intersection of two non-degenerate quadrics in $\mathbb{P}^{3}$.   
Since $H_4$ consists of 5 lines,  we get 5 projections of $\overline{C}_4$ onto intersections of two quadrics in $\mathbb{P}^{3}$,  which are birational to our $E_i$, for $i=0$,\ldots, $4$.

One can repeat this construction for $\ell\ge 5$ obtaining some projections $\phi_T$ but not all of them, only those onto elliptic curves.
\medskip

{\bf\em Remark.} Theorem \ref{jac} exposes a very specific property of the  Jacobian, and we wonder whether it is specific for these particular equations, or rather for the geometry of the curve. Indeed, suppose that a smooth curve in $\P^4$ is given as a complete intersection of 3 quadrics, each of rank 3. If the determinant curve of the corresponding net of quadrics is a union of 5 lines, then the  Jacobian splits up to an isogeny into a product of elliptic curves. But the generic case (as pointed to us by W. Castryck) is the irreducible curve of degree 5 with 3 double points. In this case the  Jacobian need not split. The same should be true for the intersection of $n-1$ quadrics, each of rank 3, in $\P^n$. In the generic case the  Jacobian need not split into a product of hyperelliptic  Jacobians. However, if the determinant variety is a union of hyperplanes, this might be true.    

  \subsection{Statistical properties of  $n_p(R^\ell)$}\label{sta}

  As we have seen in Subsection \ref{cur}, there is no hope to get
  a (more or less) explicit formula for $R^\ell$ for $\ell\ge 4.$ 
  However, it is possible to get a very precise information on its
  statistical behaviour as a function of $p$, at least for 
  $\ell=4,5,6,$ and we are going now to explain this point. The 
  general setting is as follows.

  We say that a sequence $y_p,p$ prime; lying in a set $Y$ is equidistributed with respect to a measure $\mu$ on $Y$ if for any $\mu$-measurable subset $X\subset Y$ 
  $$\lim_{x\rightarrow \infty} {\frac{\mathrm{card} \{p\le x : y_p\in X\}}{\mathrm{card} \{p\le x\}}} = \mu (X) \,.$$ 
  
 Note, that this definition implies (Dirichlet plus Chebotareff)
 that if we fix positive integers $r,d$ such that $1\le r\le d-1$ with $(r,d)=1$, then there exists the measure $\mu_d$ on $Y$ which satisfies for any $X\subset Y$ 
 $$\lim_{x\rightarrow \infty} {\frac{\mathrm{card} \{p\le x, p=md+r, m\in \Z_+ : y_p\in X\}}{\mathrm{card} \{p\le x\}}} = \mu_d (X) ,$$
 and 
 $$\mu=\frac{1}{d}\sum_{1\le r\le d-1, (r,d)=1}\mu_d\,.
$$
In fact, we use below only the cases $d=4, r=1,3$ which give the measures $\mu_1$ and $\mu_3,\mu=\frac{\mu_1}{2}+\frac{\mu_3}{2}.$

 Let $E/\Q$ be an elliptic curve, then for any prime $p$ with good reduction $E_p/\F_p$ modulo $p$ we have 
 $$|E(\F_p)|=p+1-a_p(E), \quad |a_p(E)|<2\sqrt p,$$ 
 $a_p(E)$ being the trace of the $p$-Frobenius operator.
There are two cases: \medskip 

A. $E$  is a CM-curve, that is $\mathrm{End}_{\C}(E)\ne\Z;$
\smallskip

B.  $\mathrm{End}_{\C}(E)=\Z$, that is, $E$ has no complex multiplication.\medskip

We can consider the normalized trace $t_E(p):=\frac{a_p(E)}{2\sqrt p}\in I:= ]-1,1[$. Its behaviour in the cases A and B is very different:\smallskip

A. The sequence  $$T(E):=\{t_E(p): E\;\; \hbox{has a good reduction modulo } p\}=\{t_E(p): p\in 
{\rm {Primes}}\setminus S(E)\},$$
($ S(E)$ being a finite set)  is equidistributed with respect to the following measure $\lambda_{cm}$ on $I$:
\begin{equation}
\label{cm}   
\lambda_{cm}=\frac{\delta_0}{2}+ \frac{\mu_{cm} }{2}\end{equation}
for \begin{equation}\label{cm1}\mu_{cm}=\frac{dt}{\pi\sqrt{1-t^2}}  \end{equation}
$\delta_0$ being the Dirac measure in 0. 
In this case the summand $\delta_0/2$ corresponds to the primes with supersingular reductions of $E$ (that is, inert in the complex quadratic  field $F=\mathrm{End}_{\C}(E)\otimes\Q$ of complex multiplication), while $\mu_{cm}/2$ corresponds to the primes of ordinary reduction (that is, splitting in $F$). Note also that for $F=\Q(\sqrt{-1})$ this means that the subsequence measure corresponding to $P_3:=\{p:p=4m+3\}$ is $\mu_3=\delta_0$,  while for  $P_1:=\{p:p=4m+1\}$ we have $\mu_1=\mu_{cm}$.
\smallskip

B. The sequence $T(E)$ is equidistributed with respect to the ``semi-circle''
 measure $\mu_{ST}$ on $I$,

\begin{equation}\label{st}
\mu_{ST}:=\frac{2\sqrt{1-t^2}}{\pi}dt
\end{equation} 

The result in the case A follows from the class field theory, Hecke's classical results on the expression of the (global) zeta-function of $E$ via $L$-series with Gr\"ossencharakter, equidistribution results  from Appendix A of \cite{Se} and Chapter XV of \cite{La} ; all the details can be found in Section 2 of \cite{Su}.

The result in the case B is just the Sato--Tate conjecture  stated around 1960 and proved completely in \cite{HSBT},
see the details in Section 2 of \cite{Su} .
\medskip

Let now $\ell=4.$

Recall the following definition: let $\{s_1,s_2,\ldots,s_d\}$
be $d$ real sequences 
$$s_i=\{a_{i1},a_{i2},a_{i3},\ldots\}, i=1,2,\ldots,d$$
where, say, $|a_{ij}|\le 1$ for any $i,j$. Suppose that for each 
$i=1,2,\ldots,d$ the sequence $s_i$ is equidistributed with respect to some probabilistic measure $\mu_{i}$ on $[-1,1].$
Then these sequences are {\em statistically (stochastically) independent} if the sequence $\{{\bf v}_1,{\bf v}_2,{\bf v}_3,\ldots\}\subset[-1,1]^d$ is equidistributed with respect to the product measure $\mu_1\times\mu_2\times\ldots\times\mu_d $ on $[-1,1]^d,$
where $${\bf v}_j=\left(a_{1j},a_{2j},\ldots,a_{dj}\right)\in[-1,1]^d. $$

For $n_p(R^4)$ we get the following result 

\begin{thm}\label{st4} 
{\rm I.} If $p$
runs over the set of primes $p \equiv 3 \mod 4$  then the sequence

$$\frac{1}{\sqrt p} n_p(R^4) - \frac{\sqrt{p}}{16}$$
is equidistributed with respect to the measure $\mu_{ST}=\mu_3.$
\smallskip

{\rm II.} Suppose that the 
sequences $ a_0(p), a_1(p), a_4(p)$ are  statistically independent. Then for  $p$ running over the set of primes $p \equiv 1\mod 4$  the sequence

$$\frac{1}{\sqrt p}n_p(R^4) - \frac{\sqrt p}{16}$$
is equidistributed with respect to the measure
$$ \lambda'_{cm}*\mu'_{ST}*\mu''_{ST}=\mu_1,$$
where $\mu*\lambda$ denotes the convolution of measures,
and  $$\lambda'_{cm}(t)=4 
\mu_{cm}(4t), \;  
  \mu'_{ST}(t)= 4 \mu_{ST}(4t),            
\;\mu''_{ST}(t)= 8 \mu_{ST}(8t)\,,$$ with
$\mu_{cm}(t)=\frac{dt}{\pi \sqrt{1-t^2}}, \;  \mu_{ST}(t)= \frac{2\sqrt{1-t^2}dt}{\pi} .   $
\end{thm}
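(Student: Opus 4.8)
The plan is to substitute the explicit trace formulas \eqref{p30} and \eqref{p10} into the equidistribution results recorded above (Sato--Tate in case B and its CM analogue in case A), treating the bounded correction $c_p(4)$ as negligible once divided by $\sqrt p$. For \textbf{Part I} I would start from \eqref{p30}, which for $p\equiv 3\bmod 4$ gives
$$\frac{1}{\sqrt p}\,n_p(R^4)-\frac{\sqrt p}{16}=-\frac{a_4(p)}{16\sqrt p}+\frac{c_p(4)}{\sqrt p}=-\frac{t_{E_4}(p)}{8}+\frac{c_p(4)}{\sqrt p},$$
using $t_{E_4}(p)=a_4(p)/(2\sqrt p)$. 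Since $|c_p(4)|\le c(4)$, the last term tends to $0$ uniformly and does not affect equidistribution (for any continuous test function on the relevant compact interval, uniform continuity absorbs an $o(1)$ shift). As $E_4$ has no complex multiplication, case B gives that $t_{E_4}(p)$ is $\mu_{ST}$-equidistributed; the point I would need to justify is that this persists on the progression $p\equiv 3\bmod 4$. For a non-CM curve the semicircle law is insensitive to the residue of $p$ modulo a fixed integer (the Sato--Tate group is $SU(2)$, while the congruence condition is detected by the cyclotomic character factoring through the determinant), so restricting to $p\equiv 3\bmod 4$ keeps the $\mu_{ST}$-distribution. Pushing forward under $t\mapsto -t/8$, and using evenness of $\mu_{ST}$, yields the claimed $\mu_3$.

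For \textbf{Part II} I would start from \eqref{p10}, which after the same reduction gives
$$\frac{1}{\sqrt p}\,n_p(R^4)-\frac{\sqrt p}{16}=-\frac{t_{E_0}(p)}{4}-\frac{t_{E_1}(p)}{4}-\frac{t_{E_4}(p)}{8}+o(1).$$
Here $E_0$ is CM with ordinary reduction at each $p\equiv 1\bmod 4$ (so $\mu_1=\mu_{cm}$), hence $t_{E_0}(p)$ is $\mu_{cm}$-equidistributed, while the non-CM curves $E_1,E_4$ give $\mu_{ST}$-equidistributed traces on this progression. The stated hypothesis that $a_0(p),a_1(p),a_4(p)$ are statistically independent says exactly that the vector $(t_{E_0}(p),t_{E_1}(p),t_{E_4}(p))$ is equidistributed for the product $\mu_{cm}\times\mu_{ST}\times\mu_{ST}$ on $[-1,1]^3$.

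I would then invoke the elementary pushforward principle: for a continuous $L\colon\mathbb R^3\to\mathbb R$, if the vectors equidistribute for a product measure then $L$ of the vectors equidistributes for the pushforward of that measure. Taking $L(u,v,w)=-u/4-v/4-w/8$, the pushforward of a product measure under a sum of independent coordinates is the convolution of the one-dimensional pushforwards, namely $\lambda'_{cm}(t)=4\mu_{cm}(4t)$, $\mu'_{ST}(t)=4\mu_{ST}(4t)$, and $\mu''_{ST}(t)=8\mu_{ST}(8t)$ (evenness of each factor removing the signs). This gives precisely $\lambda'_{cm}*\mu'_{ST}*\mu''_{ST}=\mu_1$, and the $o(1)$ term is harmless as before.

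The main obstacle, and the reason independence is assumed rather than proved in Part II, is establishing the joint equidistribution of $(t_{E_0},t_{E_1},t_{E_4})$. A genuine proof would require a simultaneous (joint) Sato--Tate statement for the CM curve $E_0$ together with the two non-CM curves $E_1,E_4$, controlling all mixed symmetric-power $L$-functions at once and excluding hidden correlations such as the curves being twists of, or isogenous to, one another over some extension. This lies beyond current unconditional methods, so Part II is necessarily conditional; Part I, involving only the single non-CM curve $E_4$, is unconditional.
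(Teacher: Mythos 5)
Your proposal is correct and is essentially the paper's own proof: the paper deduces Theorem \ref{st4} ``directly from formulas \eqref{p30}, \eqref{p10}, \eqref{cm}, \eqref{cm1} and \eqref{st}'', which is exactly the substitution-plus-pushforward-and-convolution argument you carry out, including treating the independence hypothesis as the input that turns the three trace sequences into a product measure. Your extra care about the bounded term $c_p(4)$ and about Sato--Tate persisting on the progressions $p\equiv 1,3 \bmod 4$ only makes explicit what the paper leaves implicit in its setup of the measures $\mu_1$ and $\mu_3$.
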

It follows directly from formulas \eqref{p30}, \eqref{p10},\eqref{cm},\eqref{cm1} and \eqref{st}. 
We see then that $$\mu_1= \frac{256h_1(t) dt}{\pi^3},\quad h_1:=\frac{1}{ \sqrt{1-16v^2}}*\sqrt{1-16s^2}*\sqrt{1-64u^2}. $$
Therefore, $h_1$ is the convolution of the functions
$f,g,h$ supported on the respective intervals $\left[-1/4, 1/4\right],\;\;  \left[-1/4,1/4\right],\;\; \left[-1/8, 1/8\right].  $ \smallskip

As for the hypothesis in {\rm II}, it looks quite plausible, moreover, it can be deducted from some cases of the Generalised Sato-Tate conjecture (GST, see \cite{VST}).

\medskip

{\bf Remark.} Note that the support of both $\lambda_{cm}$ and $\mu_{ST}$ equals the whole interval $[-1,1]$ thus
$$\mathrm{supp}(\lambda_{cm}')=\left[-\frac14,\frac14\right],\;\; \mathrm{supp}(\mu'_{ST})= \left[-\frac14,\frac14\right],\;\; \mathrm{supp}(\mu_{ST}'')=\left[-\frac18, \frac18\right],$$
and, therefore, $$\mathrm{supp}(\mu_3)=\left[-\frac18, \frac18\right],\quad \mathrm{supp}(\mu_1)=\left[-\frac58, \frac58\right] \,.$$
Indeed, if 
$$\mathrm{supp}(f)=[a_1,b_1],\;\;\mathrm{supp}(g)=[a_2,b_2],\;\;\mathrm{supp}(h)=[a_3,b_3]$$
then $\mathrm{supp}(f*g*h)=[a_1+a_2+a_3,b_1+b_2+b_3].$

\medskip

Let us compare Theorem \ref{st4} with Theorem \ref{est}. On the one hand, our result concerns only  the majority of primes, statistical approach saying nothing about particular primes and even about any sequence of primes of density 0, while the estimate of Theorem \ref{est} holds for all primes. On the other hand, our theorem is much more precise for typical behaviour. For example, as a consequence we get the following result showing the tightness of estimates \eqref{p3} and \eqref{p1}.

\smallskip
\begin{prop} Under the conditions of Theorem {\rm\ref{st4} } for any $\eps>0$  there exist $4$ primes
$p_1\equiv 1\mod 4$, $p_3\equiv 3\mod 4,$ 
$p'_1\equiv 1\mod 4$, $p'_3\equiv 3\mod 4,$
such that 
$$n_{p_1}(R^4)\ge \frac{p_1}{16}+\left(\frac58-\eps\right) \sqrt{p_1},\;\;n_{p'_1}(R^4)\le \frac{p'_1}{16}-\left(\frac58-\eps\right) \sqrt{p'_1} , $$
$$n_{p_3}(R^4)\ge \frac{p_3}{16}+\left(\frac18-\eps\right) \sqrt{p_3},\;\;n_{p'_3}(R^4)\le \frac{p'_3}{16}-\left(\frac18-\eps\right) \sqrt{p'_3}. $$
    
\end{prop}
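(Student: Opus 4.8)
The plan is to read the proposition off directly from the equidistribution statement of Theorem \ref{st4} together with the computation of the supports of $\mu_1$ and $\mu_3$ recorded in the Remark immediately preceding it. The guiding principle is that a sequence equidistributed with respect to a probability measure $\mu$ must take values in every neighborhood of every point of $\mathrm{supp}(\mu)$ with asymptotically positive frequency; since the supports reach exactly the critical values $\pm\frac58$ and $\pm\frac18$, this positive-frequency statement is precisely the tightness being claimed.

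First I would write $y_p := \frac{1}{\sqrt p} n_p(R^4) - \frac{\sqrt p}{16}$ and record the two inputs. By Theorem \ref{st4}\,I the subsequence $(y_p)_{p\equiv 3}$ is equidistributed with respect to $\mu_3$, with $\mathrm{supp}(\mu_3) = [-\tfrac18,\tfrac18]$, and this case is unconditional; by Theorem \ref{st4}\,II, under the independence hypothesis, $(y_p)_{p\equiv 1}$ is equidistributed with respect to $\mu_1$, with $\mathrm{supp}(\mu_1) = [-\tfrac58,\tfrac58]$. I would also note that both measures are absolutely continuous: $\mu_3$ is a rescaled semicircle density, and $\mu_1 = \lambda'_{cm}*\mu'_{ST}*\mu''_{ST}$ is a convolution of three continuous densities (note that for $p\equiv 1 \bmod 4$ the CM curve $E_0$ has ordinary reduction, so the relevant factor is $\mu_{cm}$ and carries no atom at $0$), hence $\mu_1$ is atomless.

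Now fix $\eps > 0$ and set $X := (\tfrac58 - \eps, \tfrac58]$. Since $\tfrac58 \in \mathrm{supp}(\mu_1)$, every neighborhood of $\tfrac58$ has positive $\mu_1$-measure by the definition of support, so $\mu_1(X) > 0$; because $\mu_1$ is atomless, equidistribution applies to the half-open interval $X$ and produces a positive-density set of primes $p \equiv 1 \bmod 4$ with $y_p \in X$. That set is in particular nonempty, and any member $p_1$ satisfies $y_{p_1} > \tfrac58 - \eps$, which rearranges to the first asserted inequality. The remaining three inequalities follow identically, replacing $X$ by $[-\tfrac58, -\tfrac58 + \eps)$ for $p'_1$, and by $(\tfrac18 - \eps, \tfrac18]$ and $[-\tfrac18, -\tfrac18 + \eps)$ (now using $\mu_3$) for $p_3$ and $p'_3$.

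The argument is essentially formal once Theorem \ref{st4} is available, so I do not expect a serious obstacle. The only genuine content is the verification that the endpoints $\pm\tfrac58$ and $\pm\tfrac18$ really lie in the supports, and this is exactly the computation already carried out in the Remark via $\mathrm{supp}(f*g*h) = [a_1+a_2+a_3, b_1+b_2+b_3]$; the one technical point worth stating explicitly is that the limiting measures are atomless, which is what allows equidistribution to be invoked for intervals abutting the endpoints rather than only for intervals with $\mu$-null boundary in the interior.
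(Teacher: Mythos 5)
Your proof is correct and is essentially the paper's own argument: the paper proves this proposition in one line (``it follows immediately from the above remark''), the remark being exactly the support computation $\mathrm{supp}(\mu_1)=\left[-\frac58,\frac58\right]$, $\mathrm{supp}(\mu_3)=\left[-\frac18,\frac18\right]$ that you invoke. Your write-up merely makes explicit the standard deduction (positive measure of neighborhoods of support endpoints, hence positive frequency, hence existence), including the correct observation that the limiting measures are atomless so equidistribution can be applied to intervals touching the endpoints.
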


This result follows immediately from the above remark.

\medskip
Is it possible to obtain analogous results for $\ell>4$ ?

\bigskip
{\bf\em Remark $\mathbf\ell=5$.} 
 
 For $\ell=5$  the genus is $17$, and the same approach yields 15 elliptic curves and a curve of genus $2$, namely:
$$C_{\{1,2,3,4,5\}}:\; y^2=x(x+1)(x+2)(x+3)(x+4).$$
However, counting points on this curve can be reduced to counting points on elliptic curves, since this curve has a non-hyperelliptic involution 
$$\sigma:  x \mapsto -(x+4), \quad y\mapsto   iy,$$
and its  Jacobian splits (over the base field for $p\equiv 1 \mod 4$ and over its quadratic extension for $p\equiv 3 \mod 4$).    Hence, the statistics for $n_p(S)$ can still be given in terms of the Sato--Tate distribution for elliptic curves.

More precisely, under corresponding independence
conditions we see that for $p\equiv 3 \mod 4$
the sequence 
 $$\frac{1}{\sqrt p}n_p(R^5)-\frac{\sqrt p}{32}$$
 is equidistributed with respect to the measure $\psi$, and for $p\equiv 1 \mod 4$ it is equidistributed with respect to the measure $\psi'$
 where the measures $\psi,\psi'$ are given  by convolution formulas, similar to (but more elaborated than) the 
 convolution formula for $\ell=4.$\medskip

{\bf\em Remark $\mathbf\ell=6$.}

If $\ell=6$, the genus is 49, and we get $35$ elliptic curves and $7$ curves of genus 2. Then some genus 2 curves  $C_T,|T|= 5$ have simple  Jacobians, which conjecturally leads to the equividistribution of  
 $$\frac{1}{\sqrt p}n_p(R^6)-\frac{\sqrt p}{64} $$
 with respect to some  $\psi_i, i=1,...m$ given by more complicated convolutions, constructed from $\lambda_{cm}, \mu_{ST}$ and some 2-dimensional 
  Sato--Tate distributions for curves of genus $2$ (see \cite[Theorem 4.8 and § 4.3]{Su}).\medskip 

{\bf\em Remark $\mathbf\ell=7$.}

For $\ell=7$,  the formulas should be of the same type as for $\ell=6,$ since the only genus 3 curve 
$C_T, |T|=7$ has   a non-hyperelliptic involution 
$$\sigma':  x \mapsto -(x+6), \quad y\mapsto   iy,$$
and its  Jacobian splits into a product of an elliptic curve and the  Jacobian of a curve of genus 2. Note, however, that the dimension of the corresponding  Jacobian is already 129.  \medskip

{\bf\em Remark $\mathbf\ell\ge 8$.}

For $\ell\ge 8$ some genus 3 curves  $C_T,|T|= 7$ have simple  Jacobians, and the situation is more complicated,
but can be, in principle, treated  using some cases of the GST. The dimensions (=genera) there begin with  $321$
for $\ell=8.$

\newpage
\bigskip

\part{Goncharova Problem and a K3 Surface}

\section{Generalizations, graphs and surfaces}\label{gen}

\subsection{First generalization}\label{fir}
As we have seen in Subsections \ref{cur}--\ref{sta} , the problem to compute $n_p(R^\ell)$ becomes more and more difficult when $\ell$ grows. Thus one can 
naturally consider some generalizations of the problem weakening the conditions on the structure of  $\ell$-tuples of 
quadratic residues. 
In what follows, we consider $\ell$-tuples of not necessarily consecutive residues $(r_1,\ldots,r_\ell)$ modulo $p$. 
We do not assume that $r_i$ is a quadratic residue.

The most  natural and naive idea is to replace the condition $r_{i+1}-r_i=1$ by the condition that 
$r_{i+1}-r_i$ is a (non-zero) square. Therefore, $n_p(R^3)$
gets replaced by the number of (non-zero) solutions
 or the system 
 $$x_1^2-x_0^2=x_3^2,\quad  x_2^2-x_1^2=x_4^2, $$
 in $\mathbb{G}_m^5(\F_p)$ which gives us a cone over the intersection $ S_{2,2}$ of two quadrics in $\mathbb{P}^4$.
 The surface $ S_{2,2}$ is anti-canonically embedded
 (it is a degree 4 del Pezzo surface) and thus is rational over a certain extension of the base field, which permits to deduce a simple formula for $ |S_{2,2}(\F_p)|.$ 

 Note that here we can also consider the behaviour 
 of $r_{i+k}-r_i, k\ge 2 $ and demand, e.g., the condition 
 $r_{i+k}-r_i=t^2. $ This last condition  is not  interesting for consecutive residues, since the answer  depends then only on the residue of $p$ modulo some positive integer, for instance $r_{i+2}-r_i=2$ is a quadratic residue for $p\equiv\pm 1 \mod 8$ and is not otherwise. For triples, imposing the third condition $r_{i+2}-r_i=t^2$ leads to the system 
 \begin{equation}\label{K3}
  x_1^2-x_0^2=x_3^2,\quad  x_2^2-x_1^2=x_4^2,\quad x_2^2-x_0^2=x_5^2   
 \end{equation}
in $\mathbb{G}_m^6(\F_p)$ defining a cone over a singular K3
surface $X$ in $\mathbb{P}^5$. Calculating the number of its $\F_p$-points in terms of $J(k)$ for $p=4k+1$ is a form of the Goncharova theorem, see Theorem \ref{t.main} below.

\subsection{Further generalizations}\label{sec}
One can naturally associate a problem of the above type with a
graph (non-oriented for $p=4k+1$, oriented for $p=4k+3$),
namely:\smallskip

{\em Non-oriented case.} For $p=4k+1$, let $\Gamma=(V,E)$ be a non-oriented graph on $\ell$ vertices, $V=\{v_0,\ldots,v_{\ell-1}\}$. For any edge $e=(v_i,v_j)\in E$ we impose the condition $r_i-r_j=y_e^2$ for $i>j$ on variables $r_i$, $i=0,\ldots,\ell-1$, and $y_e$, $e\in E$ which leads to a system of 
$m:=|E|$ quadratic equations in $s:=\ell+m=|V|+|E|$ variables 
giving an intersection $Y_\Gamma$ of $m$ quadrics in ${\A}^s$ over $\F_p$.
Note that the definition of $Y_\Gamma$ depends on an enumeration of the vertices of $\Gamma$, however, all resulting varieties will be isomorphic over $\F_p$ since $p=4k+1$.  
Indeed, $-1$ is a quadratic residue, and $r_i-r_j$ and $r_j-r_i$ are either both quadratic residues or both non-residues.
The dimension of the affine variety $Y_\Gamma$ is $\ell$. 

\begin{example}
Let $\Gamma=K_\ell$ be the complete graph. 
Put $d=(\ell+1)\ell/2$. 
Then $Y_\Gamma\subset  \A^\ell\times\A^d$ is the affine variety defined by the system
$$r_i-r_j=y_{ij}^2, \quad 0\le j<i\le\ell-1.$$
\end{example}

When defining $Y_\Gamma$ we no longer assume that $r_i$ are quadratic residues.
However, if the vertex $v_0$ is connected with all vertices $v_1$, \ldots, $v_{\ell-1}$ by edges $e_1$, \ldots, $e_{\ell-1}$, respectively (that is, $\Gamma$ is a cone over a graph on $\ell-1$ vertices), then $(r_1-r_0)$, \ldots, $(r_{\ell-1}-r_0)$ are squares by definition of $Y_\Gamma$, namely, $r_i-r_0=y_i^2$ where $y_i:=y_{e_i}$. 
In this case, we can also associate with $\Gamma$ a projective variety $X_\Gamma\subset \P^{m-1}$ with coordinates $y_e$, $e\in E$. 
Define $X_\Gamma$ as the intersection of $m-\ell+1$ homogeneous quadrics: for every edge $e=(v_i,v_j)\in E$ such that $0<j<i$ we impose the condition $y_i^2-y_j^2=y_e^2$. 
The dimension of the projective variety $X_\Gamma$ is equal to $\ell-2$. 

\medskip

{\bf\em Remark.} 
Our principal example is the complete graph on 4 vertices $K_4$, which is crucial for other cases. 
It is a cone over $K_3$, and gives the above K3 surface $X=X_{K_4}$. 

Consider the graph $K^-_4$ which has one edge less. 
It is a cone over the unique (up to an isomorphism) graph on 3 vertices with 2 edges.
The corresponding surface $X_{K^-_4}$ is given by eliminating one equation (and one variable) from \eqref{K3}, i.e., it is a projection of $X_{K_4}$. 
It coincides with the surface $S_{2,2}$ considered above so its study is much simpler than that of $X_{K_4}$.

For any $\ell$, throwing out an edge is also a projection and the resulting variety is easier to study than the original one. If we throw out more edges, we get further projections.

\medskip

It is easy to relate the numbers $|Y_\Gamma(\F_p)|$ and $|X_\Gamma(\F_p)|$. 
We will do this in Section \ref{ss.main} for the complete graph $\Gamma=K_4$.  

With every non-oriented graph $\Gamma$ we also associate the number $n_p(\G)$ as follows. 
Let $A = (r_1, \ldots, r_\ell)$ be an $\ell$-tuple of pairwise distinct residues modulo $p$.
We no longer assume that they are consecutive. 
We may assign a graph $\Gamma_A$ to $A$ by the following rule. 
Consider $\ell$ vertices $v_1$,\ldots, $v_\ell$, and connect $v_i$ and $v_j$ by an edge if and only if the difference $r_i-r_j$ is a quadratic residue modulo $p$ 
(since $-1$ is a quadratic residue this condition on $r_i-r_j$ is symmetric in $i$ and $j$). 
In what follows, we identify $\ell$-tuples that can be obtained from each other
by a permutation or an additive translation, that is, we do not distinguish between
$(r_1, \ldots, r_\ell)$ and $(r_{\sigma(1)},\ldots, r_{\sigma(\ell)})$, where $\sigma\in S_\ell$, 
and between $(r_1, \ldots, r_\ell)$ and $(r_1+a, \ldots, r_\ell+a)$, where $a$ is a residue modulo $p$.

\begin{defin}
Define $n_p(\Gamma)$ as the number of all
such $\ell$-tuples $A$ that $\Gamma_A$ is topologically equivalent to $\Gamma$.    
\end{defin}

Note that we impose conditions on $r_i-r_j$ for all pairs $(i,j)$. In particular, we require that $r_i-r_j$ be a quadratic nonresidue if $v_i$ and $v_j$ are not connected by an edge. There is an alternative definition that does not include the latter conditions. Namely, define $n'_p(\Gamma)$ as the number of all such $\ell$-tuples $A$ that $r_i-r_j$ is a quadratic residue if $v_i$ and $v_j$ are connected by an edge in $\G$. 

If $\Gamma=K_\ell$ is the complete graph on $\ell$ vertices, then $n_p(\Gamma)=n'_p(\Gamma)$. Clearly, if we know $n_p(\Gamma)$ for all graphs with $\ell$ vertices we can compute $n'_p(\Gamma)$ and vice versa.
The variety $Y_\Gamma$ can be used to compute $n'_p(\Gamma)$, and hence $n_p(\Gamma)$. 

In Section \ref{ss.main}, we shall use the following description of $n_p(K_\ell)$ for the complete graph $K_\ell$. 
Let $Y_0\subset Y_{K_\ell}$ be the subset where $y_{ij}\ne 0$, $1\le j< i\le\ell.$ 
Then we have 
\begin{prop}  
\begin{equation}\label{K}
    n_p(K_\ell)= 2^{-d}(\ell!)^{-1}p^{-1}|Y_0(\F_p)|.
   \end{equation}
\end{prop}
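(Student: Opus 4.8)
The plan is to count solutions of the defining system for $Y_0$ in a weighted fashion and then reconcile the count with the combinatorial definition of $n_p(K_\ell)$. First I would observe that a point of $Y_0(\F_p)$ consists of residues $r_1,\ldots,r_\ell$ together with nonzero choices $y_{ij}\in\F_p^*$ satisfying $r_i-r_j=y_{ij}^2$ for all $1\le j<i\le\ell$. Since all $y_{ij}\ne 0$, each difference $r_i-r_j$ is a \emph{nonzero} quadratic residue; hence the underlying tuple $A=(r_1,\ldots,r_\ell)$ has the property that every pair is connected in $\Gamma_A$, i.e. $\Gamma_A=K_\ell$ (in particular all $r_i$ are pairwise distinct). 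Conversely, every tuple $A$ with $\Gamma_A=K_\ell$ arises this way. So the projection $Y_0(\F_p)\to\{\text{tuples } A\}$ has image exactly the set of tuples realizing $K_\ell$, and I must compute the size of each fiber and then account for the two identifications (permutation and translation) built into the definition of $n_p$.

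The key step is the fiber count. Fix a valid tuple $A$. For each of the $d=\binom{\ell}{2}$ edges the equation $y_{ij}^2=r_i-r_j$ has exactly $2$ solutions in $\F_p^*$ (both square roots of a nonzero square), and these choices are independent across edges; thus each fiber has size $2^{d}$. This explains the factor $2^{-d}$. Next I would handle the translation identification: the map $A\mapsto A+a$ for $a\in\F_p$ preserves all differences $r_i-r_j$, so it acts freely on the set of valid tuples (freeness because the $r_i$ are distinct, so $A+a=A$ forces $a=0$), partitioning tuples into orbits of size exactly $p$. This gives the factor $p^{-1}$. Finally the symmetric group $S_\ell$ acts by permuting the entries; since $K_\ell$ is invariant under all permutations and the $r_i$ are distinct, this action is also free, contributing orbits of size $\ell!$ and hence the factor $(\ell!)^{-1}$. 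Assembling: the number of tuples-up-to-translation-and-permutation realizing $K_\ell$ equals $(\ell!)^{-1}p^{-1}\cdot 2^{-d}|Y_0(\F_p)|$, which is precisely $n_p(K_\ell)$ as in \eqref{K}.

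The main obstacle is verifying that both group actions (translation by $\F_p$ and permutation by $S_\ell$) are genuinely free on the relevant set, since the stated formula is exact rather than merely asymptotic; any tuple with a nontrivial stabilizer would spoil the clean orbit count. The translation action is free because distinctness of the $r_i$ forbids a fixed point, and the permutation action is free for the same reason, as a nontrivial permutation fixing a tuple of distinct entries is impossible. One must also check that these two actions commute and that their combined action is free, so that the total orbit size is the product $p\cdot\ell!$; this follows since a translation and a permutation fixing the same tuple would each have to be trivial. Granting this, the identity \eqref{K} is immediate from the fiber computation.
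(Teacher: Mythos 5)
Your proof is correct and is essentially the paper's own argument: the paper's one-line proof invokes exactly the three free actions you identify --- $\{\pm 1\}^d$ acting by sign changes on the $y_{ij}$ (which is your fiber count of $2^d$), $S_\ell$ acting by permutation, and $\F_p$ acting by translation --- and your write-up simply makes the fiber/orbit bookkeeping and the freeness checks explicit. The only point worth tightening is the freeness of the \emph{combined} $S_\ell\times\F_p$ action: the condition is $\sigma(A)+a=A$ with $\sigma$ and $a$ not separately assumed to fix $A$, which you can settle by summing the entries to get $\ell a=0$, hence $a=0$ (as $\ell<p$), and then $\sigma=\mathrm{id}$ by distinctness.
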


\begin{proof} This is clear in view of the free actions of the permutation group $S_\ell$ (acting on $\{1,\ldots,\ell\}$), $\{\pm 1\}^d$   (acting on  
$\{y_{ij}\}^d$ by sign changes) and of $\F_p$ (acting by translations $(x_1,\ldots,x_\ell)\mapsto(x_1+a,\ldots,x_\ell+a)$).
\end{proof}

Note also that numbers $n_p(\Gamma)$ can be interpreted in terms of the Paley graph associated with $\F_p$. 
We are grateful to Alexander B. Kalmynin from whom we learned about Paley graphs.
For instance, $n_p(K_\ell)$ is equal to the number of $\ell$-cliques in the Paley graph of $\F_p$ divided by $p$. 
In particular, Theorem \ref{t.main} below gives a formula for the number of $4$-cliques in the Paley graph of $\F_p$ (cf. \cite[Corollary 1.5]{BB}).

{\em Oriented case.} For $p=4k+3$ the setting is the same, except
that  the graph should be oriented, since now the conditions $r_i-r_j=y_e^2$ and $r_j-r_i=y_e^2$ are opposite to each other.

\subsection{Main theorem} \label{ss.main} Goncharova's main result concerns the case of quadruples for\linebreak $p=4k+1$, the condition which we suppose to hold until the end of our paper.
As there are $11$ isomorphism classes of simple graphs with four vertices, there are $11$ numbers $n_p(\Gamma)$ for every $p$. 

\begin{thm}[Goncharova] \label{t.main} Assume that $p=4k+1$.
All functions $n_p(\Gamma)$ (considered as functions of $k$) can be explicitly expressed as polynomials in $k$ and $d(k)$ where
$$d(k)=\frac{J(k)^2-4}{32}.$$

In particular for $\Gamma=K_4$  one has
\begin{equation}\label{main}
 n_p(K_4)=\frac{k(k-1)(k-4)+2kd(k)}{24}.   
\end{equation}  
\end{thm}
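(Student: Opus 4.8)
The plan is to convert the combinatorial quantity $n_p(K_4)$ into a point count on the K3 surface $X=X_{K_4}$ and then to extract the arithmetic of the CM curve $E_0\colon y^2=x^3-x$. Write $\chi(\cdot)=\genfrac(){0.5pt}{0}{\cdot}{p}$ for the quadratic character. First I would apply the Proposition \eqref{K}: since $\ell=4$ and $d=\binom{4}{2}=6$,
\[
n_p(K_4)=\frac{1}{2^{6}\cdot 4!\cdot p}\,|Y_0(\F_p)|.
\]
Every point of $Y_0$ is recovered from a free translation parameter $r_0\in\F_p$ together with a point of the affine cone over $X$ having all six coordinates nonzero; stripping the translation and the scaling $\F_p^\ast$-action gives $|Y_0(\F_p)|=p(p-1)\,|X^\circ(\F_p)|$, where $X^\circ\subset X$ is the locus off the six coordinate hyperplanes. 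Hence $n_p(K_4)=\tfrac{p-1}{1536}\,|X^\circ(\F_p)|$, and since $p-1=4k$ the whole problem reduces to computing $|X^\circ(\F_p)|$, which I would obtain from $|X(\F_p)|$ by inclusion--exclusion over the coordinate sections. Those sections are the conics and lines cut on the rank-$3$ quadrics (and the del Pezzo surface $S_{2,2}$ of Subsection \ref{fir}); all are rational, so their contribution is an explicit polynomial in $p$, hence in $k$.

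The second step is to compute $|X(\F_p)|$ by character sums. Counting square roots with the weight $\tfrac12(1+\chi(\cdot))$ gives, for the affine cone $\widehat X$,
\[
|\widehat X(\F_p)|=\sum_{a,b,c\in\F_p}\bigl(1+\chi(b^2-a^2)\bigr)\bigl(1+\chi(c^2-a^2)\bigr)\bigl(1+\chi(c^2-b^2)\bigr).
\]
Expanding, the constant term is $p^3$, the three single sums vanish, and the three double sums are elementary and equal $p(p-1)$ each (here $p\equiv1\bmod4$ makes $\chi(-1)=1$). The only nonelementary term is
\[
T=\sum_{a,b,c\in\F_p}\chi\bigl((b^2-a^2)(c^2-a^2)(c^2-b^2)\bigr).
\]
Separating $a=0$ and rescaling $b=a\beta,\ c=a\gamma$ for $a\neq0$ (so that $\chi$ ignores the square factor $a^{6}$) collapses $T$ to a single two-variable sum,
\[
T=(p-1)(U-2),\qquad U=\sum_{\beta,\gamma\in\F_p}\chi\bigl((\beta^2-1)(\gamma^2-1)(\gamma^2-\beta^2)\bigr),
\]
whence $|X(\F_p)|=p^2+4p-1+U$, and everything rests on evaluating $U$.

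The third and decisive step is to show that $U$ equals $J(k)^2$ up to an explicit polynomial in $p$. Geometrically this is the statement that the transcendental part of $H^2(X)$ is the Hecke (tensor) square of the CM motive of $E_0$, so that Frobenius acts on it with trace $a_0(p)^2=J(k)^2$; this is the ``unexpected relation'' advertised in the introduction. Concretely I would realize $U$ through the elliptic fibration of $X$: writing $U=\sum_\beta\chi(\beta^2-1)\sum_\gamma\chi\bigl((\gamma^2-1)(\gamma^2-\beta^2)\bigr)$ exhibits the inner sum as (minus) the Frobenius trace of the fibre $y^2=(x^2-1)(x^2-\beta^2)$, which is $2$-isogenous to the Legendre curve $y^2=x(x-1)(x-\beta^2)$. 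Splitting the outer sum over the base according to the quadratic characters of $\beta\pm1$ produces the four quadratic twists of $E_0$ whose traces pair into $a_0(p)^2$; equivalently one identifies $X$ over $\overline{\F}_p$ with the Kummer surface of a product of CM curves isogenous to $E_0$, whose abelian part yields the $J(k)^2$ term and whose $16$ exceptional curves yield polynomial terms. Feeding $U=J(k)^2+(\text{poly in }p)$ back through the chain and using $p=4k+1$ together with $J(k)^2+4b(p)^2=4p$ collapses everything to a polynomial in $k$ and $d(k)=(J(k)^2-4)/32$, and matching constants yields exactly \eqref{main}.

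The main obstacle is precisely this third step: proving that the two-variable sum $U$ is governed by $J(k)^2$ and not by the traces of a genuinely varying family. The fibres $y^2=(x^2-1)(x^2-\beta^2)$ are not isotrivial, so one cannot pull a constant trace out of the sum; the CM collapse must come either from identifying $X$ as a singular (Picard number $20$) Kummer surface attached to $E_0$ whose transcendental lattice carries the CM Hecke character, or from a direct Jacobi-sum evaluation of $U$. In either route the delicate points are the correct bookkeeping of the finitely many degenerate fibres and of the quadratic twists (the ``four twists''), and the tracking of the elementary polynomial corrections---both from the coordinate-hyperplane sections and from the bad fibres---so that they assemble into the precise coefficients of \eqref{main}. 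Once $n_p(K_4)$ is settled, the remaining ten graphs require no new CM input: deleting an edge is a projection to a simpler (rational, or the del Pezzo $S_{2,2}$) variety with polynomial point count, and the passage between $n_p(\Gamma)$ and $n'_p(\Gamma)$ is inclusion--exclusion over edge subsets, so all eleven $n_p(\Gamma)$ become polynomials in $k$ and $d(k)$.
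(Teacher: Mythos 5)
Your reduction steps are sound and essentially track the paper: the proposition giving \eqref{K}, together with stripping the translation by $\F_p$ and the scaling by $\F_p^*$, gives $n_p(K_4)=\frac{p-1}{1536}\,|S^0(\F_p)|$ (your $X^\circ$ is the paper's $S^0$), and your inclusion--exclusion over coordinate sections is Lemma \ref{S0toS}, whose explicit outcome is $|S^0(\F_p)|=|S(\F_p)|-24p+80$. Your character-sum expansion is also correct and is a legitimate, more elementary substitute for the paper's birational Lemmas \ref{StoX} and \ref{points}: with $\chi$ the quadratic character one does get $|S(\F_p)|=p^2+4p-1+U$, where $U=\sum_{\beta,\gamma}\chi\bigl((\beta^2-1)(\gamma^2-1)(\gamma^2-\beta^2)\bigr)$, so the theorem for $K_4$ is equivalent to the single identity $U=J(k)^2-2p+2$.

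That identity, however, is the entire content of the theorem, and your third step does not prove it --- you say so yourself. Moreover, the mechanism you sketch cannot work as stated: in your fibration the Jacobian of the fiber over $\beta$ is the Legendre curve $v^2=u(u-1)(u-\beta^2)$, whose $j$-invariant varies with $\beta$, so the fibers are \emph{not} quadratic twists of $E_0$, and splitting the base by the characters of $\beta\pm1$ does not make them so (each character class of the base is a conic $u^2-s^2$-type $=2$, over which the fibers still vary). The idea you are missing is the paper's key move (Lemma \ref{StoX} followed by the substitution $x=x_1$, $y=tx_1$, $z=y_1x_1$ in Theorem \ref{surface-curve}): a birational change of coordinates turns the surface into $y_1^2=(t^2x_1^4+1)(t^2+1)$, whose fibration over the $t$-line is \emph{isotrivial} --- every smooth fiber is a quadratic twist of the CM quartic $u^2=s^4+1$, determined only by the residue pattern of $(t,t^2+1)$ --- and, crucially, the number of base points $t$ realizing a given pattern is itself $\frac14|E_i^\circ(\F_p)|$ for the very same four twist curves $E_i$. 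This double occurrence yields the sum-of-squares identity $|[X'\setminus X'_0](\F_p)|=\frac14\sum_{i=1}^{4}|E_i^\circ(\F_p)|^2$, and those squares are exactly what produce the term $a^2=J(k)^2$ after the twist bookkeeping of Tables 1--2. Without this isotrivialization (or an actual Jacobi-sum evaluation of $U$, which you do not supply), your argument is a correct reduction, not a proof. Your concluding paragraph on the remaining ten graphs matches the paper's (easy, likewise only asserted) claim and is unobjectionable once $K_4$ is settled.
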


Goncharova had an elementary proof of this theorem, however, we were unable to recover the proof of the second, most difficult part of it, from her notes.
It seems that results of a somewhat similar flavor are proved in \cite{BE}.

Trying to understand her notes we started with numerical verification of a version of formula \eqref{main} for all odd primes $p<20000$, namely the formula \eqref{2} in Section \ref{s.geom} below, and much later we found out an algebraic geometry proof that we present in the next section. 

Very recently, a generalization of Theorem \ref{t.main} in terms of 4-cliques in the Paley graphs for $\Z/n\Z$ was proved in \cite[Corollary 1.5]{BB} by elementary combinatorial methods. 
 
\section{Proof: counting points  on elliptic curves and a K3 surface}\label{s.geom}
\subsection{A K3 surface} 
 Let $p=4k+1$ and let $\Gamma=K_4$ be the complete non-oriented graph on $\ell$ vertices. We consider the case $\ell=4$.
First, using \eqref{K} we get
$$n_p(K_4)=  2^{-6}24^{-1}p^{-1}|Y_0(\F_p)|$$
for the system 
$$r_1-r_2=y_{12}^2,\; r_1-r_3=y_{13}^2,\; r_1-r_4=y_{14}^2,\; r_2-r_3=y_{23}^2, \; r_2-r_4=y_{24}^2, \; r_3-r_4=y_{34}^2,\;$$
and $y_{ij}\ne 0$, which defines $Y_0$. 

Let us then put $Y'_0=Y_0\cap \{r_4=0\}.$ It is
given by
$$r_1-r_2=y_{12}^2,\; r_1-r_3=y_{13}^2,\; r_1 =y_{14}^2,\; r_2-r_3=y_{23}^2,\; r_2 =y_{24}^2,\; 
r_3 =y_{34}^2,\;$$
and $y_{ij}\ne 0$. 
Hence, we get
$$ n_p(K_4)=  2^{-6}24^{-1}|Y'_0(\F_p)|.$$
Eliminating then $r_1$, $r_2$ and $r_3$ and redefining $x_0=y_{14}$, $x_1=y_{24}$, $x_2=y_{34}$, $x_3=y_{23}$, $x_4=y_{12}$, $x_5=y_{13}$, we get the system of equations

\begin{equation}\label{surface}
 x_1^2 - x_2^2=x_3^2,\quad x_0^2 - x_1^2 = x_4^2, \quad  x_0^2-x_2^2=x_5^2 ,   
\end{equation}
i.e., the  intersection $\widetilde S(\F_p)$ of 3 quadrics, each of rank 3,  in $\mathbb{A}^6$  and its image $S(\F_p)$ in $\P^5$. 

This surface $S$ is not smooth, it has 16 simple singularities
$$x_1=x_2=x_3=0, \mbox{ and } x_0=\pm x_4=\pm x_5   \mbox{ or } x_0=\pm x_4=\mp x_5;$$
$$x_0=x_1=x_4=0, \mbox{ and } x_2=\pm ix_3=\pm ix_5 \mbox{ or } x_2=\pm ix_3=\mp ix_5;$$
$$x_0=x_2=x_5=0, \mbox{ and } x_1=\pm x_3=\pm ix_4  \mbox{ or } x_1=\pm x_3=\mp ix_4;$$
$$x_3=x_4=x_5=0, \mbox{ and } x_0=\pm x_1=\pm x_2   \mbox{ or } x_0=\pm x_1=\mp x_2.$$
It is a singular K3 surface, and with its 16 singularities it is no wonder  that it is a Kummer surface, see subsection \ref{alt}. We consider its reduction modulo $p$, and we want to count the number $|S^0(\F_p)|$, where $S^0$ is the intersection of $S$ with the torus $\mathbb{G}_m^5$, i.e., we consider only solutions with all $x_i\ne 0$. Note that the answer we need is 
\begin{equation}\label{1/32}
n_p(K_4)={\frac{1}{64}}\cdot {\frac{1}{24}}\cdot(p-1) |S^0(\F_p)|,    
\end{equation} 
where the factor $ (p-1) $ corresponds to the fact  that $\tilde S$ is the cone over $S.$

\begin{lemma}\label{S0toS}
We have
$$|S^0(\F_p)|=|S(\F_p)|- 24p+80,$$
and hence
$$n_p(K_4)=\frac{1}{64}\cdot \frac{1}{24}\cdot (p-1)(|S(\F_p)|-24p+80).$$
\end{lemma}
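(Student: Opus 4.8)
The plan is to compute the difference $|S(\F_p)|-|S^0(\F_p)|$, which counts exactly the points of $S(\F_p)$ having at least one vanishing coordinate, and then to feed the result into \eqref{1/32}. I would organize the count by stratifying the boundary according to the precise set $Z\subseteq\{0,1,\dots,5\}$ of coordinates that vanish, writing $|S|-|S^0|=\sum_{Z\neq\emptyset}|S_Z|$, where $S_Z$ is the locus on which exactly the coordinates indexed by $Z$ are zero. The decisive structural observation is that each of the three defining quadrics \eqref{surface} involves only three of the six variables and has the shape $a^2=b^2+c^2$; hence within each such triple ($\{1,2,3\}$, $\{0,1,4\}$, $\{0,2,5\}$) one can never have exactly two coordinates vanish, since two zeros force the third. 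This immediately cuts the $2^6$ a priori patterns down to a short list of candidate sets $Z$.

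The linear-in-$p$ contribution comes from the singletons. First I would treat, say, $Z=\{3\}$: setting $x_3=0$ forces $x_1=\pm x_2$ and, after combining the equations, $x_5=\pm x_4$, so the stratum splits into four pieces indexed by these two signs, each piece being the smooth conic $x_0^2-x_1^2-x_4^2=0$ with all three coordinates nonzero. Since $p\equiv 1\pmod 4$, we have $\sqrt{-1}\in\F_p$, and each such conic has $p+1$ points, of which exactly six lie on the coordinate axes (two each for $x_0=0$, $x_1=0$, $x_4=0$), leaving $p-5$ admissible points. Thus $|S_{\{3\}}|=4(p-5)$, and the same computation for each of the six coordinates gives $6\cdot 4(p-5)=24p-120$. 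Here it is essential to check that the four sign-pieces are genuinely disjoint in $\P^5$ and that no conic point carries a second zero, so that the six removed points are distinct.

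The constant correction comes from the pairs and triples. I would examine the candidate pairs directly: the three ``outer'' pairs $\{3,4\}$, $\{3,5\}$, $\{4,5\}$ turn out empty, because once two of $x_3,x_4,x_5$ vanish the three equations over-determine the remaining coordinates and force yet another to vanish; the three ``mixed'' pairs $\{0,3\}$, $\{1,5\}$, $\{2,4\}$ each yield $8$ points (one free coordinate, up to three independent sign choices), contributing $24$ in total. Finally the only surviving triples are $\{1,2,3\}$, $\{0,1,4\}$, $\{0,2,5\}$, $\{3,4,5\}$, which one recognizes as precisely the four families of four singular points listed after \eqref{surface}, contributing $16$. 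Summing, $|S|-|S^0|=(24p-120)+0+24+16=24p-80$, which is the first assertion; substituting into \eqref{1/32} then yields the stated formula for $n_p(K_4)$.

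The main obstacle is not any single computation but the bookkeeping of realizability: the necessary conditions ``no triple has exactly two zeros'' are not sufficient, as the empty outer pairs show, so each candidate $Z$ must be inspected on its own to decide whether the remaining equations are consistent with all other coordinates nonzero, and to count the resulting finite set or conic exactly. Careful use of $p\equiv 1\pmod 4$ (so that $\sqrt{-1}$ exists and the relevant conics and sign-twists behave uniformly) together with distinctness of points in $\P^5$ is what makes the constants $24$, $24$, $16$ — and hence the final $-80$ — come out correctly.
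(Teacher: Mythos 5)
Your count is correct and arrives at the same total $24p-80$ as the paper, but it is organized differently and the strata are counted by different means, so it is worth comparing the two. You stratify $S\setminus S^0$ symmetrically by the exact vanishing set $Z\subseteq\{0,\dots,5\}$ and count everything by direct coordinate geometry: each of the six singleton strata is four disjoint smooth conics minus six axis points, giving $4(p-5)$ apiece; the three ``opposite-edge'' pairs $\{0,3\},\{1,5\},\{2,4\}$ give $8$ points each; and the four triangle-triples give the $16$ singular points. The paper instead splits off the full hyperplane section $D=S\cap\{x_0=0\}$ (all points with $x_0=0$, regardless of the other coordinates), counts it as a $4$-fold cover of a conic ramified at $4$ points, so $|D(\F_p)|=4(p-1)$, and then stratifies the affine chart $S_a$ by the number of vanishing coordinates; its device for the large strata is arithmetic rather than geometric: reintroducing the vertex variables $r_1,\dots,r_4$ of $K_4$, each piece $S_1\cap\{r_i=r_j\}$ is identified with $2^4\,n_p(RR)=16(k-1)=4(p-5)$ points via Aladov's formula, and the $S_2$, $S_3$ strata come from vertex identifications ($16$ and $8$ points). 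So your route is more self-contained and symmetric --- no appeal to Aladov or to the graph interpretation, only conics and sign choices --- while the paper's route keeps the quadratic-residue meaning of each stratum visible and imports the linear-in-$p$ counts from results already recalled in Section \ref{s.arithm}. One step you should spell out is the completeness of your list of candidate sets $Z$: your triangle principle (the zero set meets each of $\{1,2,3\},\{0,1,4\},\{0,2,5\},\{3,4,5\}$ in $0$, $1$ or $3$ elements) also excludes every $Z$ with $|Z|\ge 4$, since a count of edge--triangle incidences forces such a $Z$ to contain two full triangles, while any two triangles of $K_4$ share exactly one edge and so their union already has five edges. This is the analogue of the paper's assertion $S_4=\varnothing$ and is needed before you may sum only over singletons, pairs and triples.
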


\begin{proof}
Let us first count the number of points in the affine surface $S_a:=S\cap\{x_0=1\}$.
The complement $D:=S\setminus S_a$ is given by equations
$$x_1^2-x_2^2=x_3^2, \quad -x_1^2=x_4^2, \quad -x_2^2=x_5^2.$$
Since $D$ is a 4-fold ramified cover over a smooth conic with 4 ramification points, we have
$$|D(\F_p)|=4(p-3)+8=4(p-1).$$

We now compute $|(S_a\setminus S^0)(\F_p)|$ using decomposition:
$$S_a\setminus S^0=S_1\sqcup S_2\sqcup S_3,$$
where $S_i\subset S_a$ consists of all points with exactly $(6-i)$ non-zero coordinates.
Note that by definition of $S$ the coordinates $x_0$,\ldots, $x_5$  correspond to 6 edges $E_0$,\ldots, $E_5$ of the graph $K_4$, and the defining equations of $S$ have form
$$x_i^2-x_j^2=x_k^2$$
for triples $\{i, j, k\}$ such that the edges $E_i$, $E_j$ and $E_k$ form a triangle.
In particular, if two of coordinates $x_i$, $x_j$, $x_k$ vanish, then the third one also vanishes.
It follows that $S_4=\varnothing$.

To compute $|S_i(\F_p)|$ it is convenient to reintroduce variables $r_1$,\ldots, $r_4$ corresponding to the vertices of $K_4$.
We have
$$r_1-r_4=x_0^2=1, \quad r_2-r_4=x_1^2, \quad r_3-r_4=x_2^2,$$
$$r_2-r_3= x_3^2, \quad r_1-r_2= x_4^2, \quad r_1-r_3=x_5^2.$$
Then $S_1$ can be decomposed as follows:
$$S_1=\bigsqcup_{\{i,j\}\ne \{1,4\}} S_1\cap\{r_i=r_j\}.$$
It is easy to check that $|(S_1\cap\{r_i=r_j\})(\F_p)|=2^4n_p(RR)=16(k-1)$ in all 5 cases.
Indeed, using symmetries of $K_4$ it is enough to consider just two cases: $\{i,j\}=\{2,3\}$ and $\{i,j\}=\{1,2\}$.
In the first case, $r_2-r_4=(r_2-r_1)+(r_1-r_4)$ so $r_2-r_4$ and $r_2-r_1$ are consecutive quadratic residues.
In the second case, so are $r_3-r_4$ and $r_3-r_1$.
It follows that $|S_1(\F_p)|=5\cdot16(k-1)=20(p-5)$.

Similarly, we have the following decomposition:
$$S_2=(S_2\cap\{r_1=r_2, \, r_3=r_4\})\sqcup (S_2\cap\{r_1=r_3, \, r_2=r_4\}),$$
which implies $|S_2(\F_p)|=2\cdot 2^3=16$.

Finally, we have
$$S_3=(S_3\cap\{r_1=r_2=r_3\})\sqcup (S_3\cap\{r_2=r_3=r_4\}),$$
hence, $|S_3(\F_p)|=2\cdot 2^2=8$.

Combining all calculations together we get:
$$|S^0(\F_p)|=|S(\F_p)|-4(p-1)-20(p-5)-16-8=|S(\F_p)|-24p+80.$$
\end{proof}

\begin{lemma}\label{LidatoS} Formula \eqref{main} is equivalent to the following formula:
\begin{equation}\label{2}
 |S(\F_p)|=(p+1)^2+J(k)^2.
\end{equation}
\end{lemma}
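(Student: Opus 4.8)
The plan is to treat \eqref{2} and \eqref{main} as two candidate closed-form expressions and to show that each follows from the other by \emph{pure substitution} into the relation already established in Lemma~\ref{S0toS}. That lemma gives
$$n_p(K_4)=\frac{1}{64}\cdot\frac{1}{24}\cdot(p-1)\bigl(|S(\F_p)|-24p+80\bigr),$$
which is an affine function of $|S(\F_p)|$ with nonzero leading coefficient, since $p-1=4k\ne 0$ for $p\ge 5$. Hence, for a fixed prime $p$, the value $n_p(K_4)$ determines $|S(\F_p)|$ and conversely, so the two formulas are equivalent precisely when substituting the right-hand side of \eqref{2} into the displayed relation reproduces the right-hand side of \eqref{main}.

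First I would carry out that substitution. Writing $p=4k+1$, so $p-1=4k$, and inserting $|S(\F_p)|=(p+1)^2+J(k)^2$ gives
$$|S(\F_p)|-24p+80=(p+1)^2-24p+80+J(k)^2=p^2-22p+81+J(k)^2.$$
Expanding $p^2-22p+81$ in terms of $k$ yields $16k^2-80k+60$, and the crucial step is to eliminate $J(k)$ using the definition $d(k)=\frac{J(k)^2-4}{32}$, i.e. $J(k)^2=32d(k)+4$. This turns the bracket into $16k^2-80k+64+32d(k)=16(k-1)(k-4)+32d(k)$, where the clean factorization $16k^2-80k+64=16(k-1)(k-4)$ does all the work.

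Finally I would collect the constants. Multiplying by $\frac{p-1}{64\cdot 24}=\frac{4k}{64\cdot 24}=\frac{k}{16\cdot 24}$ cancels the factor $16$ against $16(k-1)(k-4)+32d(k)$, leaving
$$n_p(K_4)=\frac{k}{24}\bigl((k-1)(k-4)+2d(k)\bigr)=\frac{k(k-1)(k-4)+2kd(k)}{24},$$
which is exactly \eqref{main}. Since every step is an identity, reading the chain backwards recovers \eqref{2} from \eqref{main}, giving the asserted equivalence. There is no genuine obstacle here beyond careful bookkeeping of the numerical constants $64$, $24$, $16$, $32$ and the shift $-24p+80$; the only point worth flagging is that one must verify $p-1\ne 0$, so that the affine relation of Lemma~\ref{S0toS} is indeed invertible and the implication runs in both directions.
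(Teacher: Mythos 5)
Your proof is correct and follows essentially the same route as the paper: both rely on the affine relation of Lemma~\ref{S0toS} and reduce the equivalence to the algebraic identity $(p+1)^2-24p+80=p^2-22p+81=16(k-1)(k-4)-4$ combined with $J(k)^2=32d(k)+4$. The only (minor) refinement is that you make explicit the invertibility of the affine relation (coefficient $p-1\ne 0$), which the paper leaves implicit when it runs the computation in the opposite direction, from \eqref{main} to \eqref{2}.
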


\begin{proof}
Indeed, \eqref{main} reads $$n_p(K_4)=\frac{k(k-1)(k-4)+2kd(k)}{24},\;d(k)=\frac{J(k)^2-4}{32},\;k=\frac{p-1}{4}.$$
Since $p=4k+1$ we have

$$n_p(K_4)=\frac{k(k-1)(k-4)+2k\cdot\frac{J(k)^2-4}{32}}{24}=\frac{k}{16\cdot 24}\cdot\left((p-5)(p-17)+J(k)^2-4\right)= $$
$$=\frac{p-1}{64\cdot 24}\left(p^2-22p+81 +J(k)^2 \right),$$
which equals to
$$\frac{p-1}{64\cdot 24}(|S(\F_p)|-24p+80)$$
if $$|S(\F_p)|=(p+1)^2+J(k)^2.$$
\end{proof}

\begin{lemma}\label{StoX} Surface $S$ is birationally isomorphic over $\F_p$ to the affine surface $X$ given by
\begin{equation}\label{double} z^2=(x^2y^2+1)(x^2+y^2).\end{equation}
Moreover, there is a regular isomorphism between $S\setminus D$ and  $X\setminus D_1$,  where the divisors $D$ on $S$ and $D_1$ on $X$ are given by $\{x_1x_5=0\}$ and $\{xyz=0\}$, respectively. 
\end{lemma}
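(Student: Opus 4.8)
The plan is to write down explicit mutually inverse rational maps, exploiting that over $\F_p$ each defining conic of $S$ splits into linear factors. Subtracting the last two equations of \eqref{surface} yields the auxiliary relation $x_5^2-x_4^2=x_3^2$, so on $S$ both
$$\frac{x_1-x_2}{x_3}\cdot\frac{x_1+x_2}{x_3}=1 \qquad\text{and}\qquad \frac{x_5-x_4}{x_3}\cdot\frac{x_5+x_4}{x_3}=1$$
hold. This is exactly the statement that the two conics are rationally parametrised, and it suggests the affine coordinates
$$x:=\frac{x_1-x_2}{x_3}=\frac{x_3}{x_1+x_2},\qquad y:=\frac{x_5-x_4}{x_3}=\frac{x_3}{x_4+x_5},\qquad z:=\frac{2x_0x_3}{(x_1+x_2)(x_4+x_5)},$$
where the two expressions for $x$ and for $y$ coincide on $S$.

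I would then solve these relations for the ratios, getting $x_1/x_3=(1+x^2)/2x$, $x_2/x_3=(1-x^2)/2x$, $x_4/x_3=(1-y^2)/2y$, $x_5/x_3=(1+y^2)/2y$, and substitute into the remaining equation $x_0^2-x_1^2=x_4^2$. Because $(x_0/x_3)^2=(x_1/x_3)^2+(x_4/x_3)^2$, the right-hand side collapses and one finds $z^2=(x^2+y^2)(x^2y^2+1)$, which is \eqref{double}. For the converse I would define the rational map $X\to S$ by
$$(x_0:x_1:x_2:x_3:x_4:x_5)=\bigl(z:y(1+x^2):y(1-x^2):2xy:x(1-y^2):x(1+y^2)\bigr)$$
and verify directly that the image lies on $S$: indeed $x_1^2-x_2^2=y^2\bigl((1+x^2)^2-(1-x^2)^2\bigr)=4x^2y^2=x_3^2$, while the remaining two equations both reduce to $y^2(1\pm x^2)^2+x^2(1\mp y^2)^2=(x^2+y^2)(x^2y^2+1)=z^2$. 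Since the two maps are inverse to each other on a dense open set, this establishes the birational isomorphism over $\F_p$.

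For the stronger assertion I would restrict both maps to the prescribed open sets and check that they are regular and mutually inverse there. On $X$ the inverse is regular wherever the displayed homogeneous vector is nonzero, which holds once $xy\ne0$ (so that the coordinate $2xy$ survives); removing $\{z=0\}$ further discards the ramification of the double cover, so the inverse is a morphism on $X\setminus D_1$. On $S$ the forward map is controlled by the vanishing of $x_3$, $x_1+x_2$, $x_4+x_5$, all of which lie over $\{x_3=0\}$, together with the $16$ nodes of $S$; the task is to show that, after the correct normalisation of $x,y,z$, these indeterminate or contracted loci are absorbed precisely into $D=\{x_1x_5=0\}$, and that the image is exactly $X\setminus D_1$ with $D_1=\{xyz=0\}$.

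I expect this last bookkeeping, rather than the map itself, to be the main obstacle. The birational map is essentially forced by the factorisations and is a one-line computation, but $S$ is a singular (Kummer) surface with $16$ nodes, and on it the two formulas for $x$ (resp. $y$) resolve the indeterminacy on different branches. With the naive normalisation above one computes, from $x_1=y(1+x^2)$ and $x_5=x(1+y^2)$, that $\{x_1x_5=0\}$ corresponds to $\{xy(1+x^2)(1+y^2)=0\}$ rather than to $\{xyz=0\}$, and one finds a curve over $\{x_3=0\}$ that is sent to infinity; reaching the clean divisors in the statement therefore requires a careful choice of the affine coordinates on $X$ (possibly after composing with a symmetry of $X$ or using a different pair of factorisations), together with an honest check of injectivity off $D$ and of the behaviour at the nodes. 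That verification is the real content of the \emph{moreover} clause.
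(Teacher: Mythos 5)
Your first half is correct and is essentially the paper's own argument: parametrize two of the rank-$3$ quadrics rationally, and the third equation collapses to $z^2=(x^2y^2+1)(x^2+y^2)$. Your explicit maps do check out (the paper does the same computation in the chart $x_5=1$, after the substitution $x_2\mapsto ix_2$). But the \emph{moreover} clause is a genuine gap in your write-up, as you yourself admit, and the source of the trouble is concrete: you parametrized the pair of conics $x_1^2-x_2^2=x_3^2$ and $x_5^2-x_4^2=x_3^2$, which share the variable $x_3$. With that choice the exceptional locus of your maps is $\{x_0x_3=0\}$, not $\{x_1x_5=0\}$: your $x,y$ are indeterminate exactly on $\{x_3=0\}$ (on $\{x_3\ne 0\}$ the equation $x_1^2-x_2^2=x_3^2\ne 0$ forces $x_1\pm x_2\ne 0$, so $x,y$ are automatically nonzero there), and your $z=2x_0xy/x_3$ vanishes exactly when $x_0=0$. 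So what your maps actually prove is $S\setminus\{x_0x_3=0\}\cong X\setminus D_1$. The paper avoids this mismatch by parametrizing instead the two conics that share $x_1$, namely $x_1^2-x_2^2=x_3^2$ and $x_0^2-x_1^2=x_4^2$, in the chart $x_5=1$: the inverse map then reads (after $x_2\mapsto ix_2$) $x=(x_2+x_3)/x_1$, $y=(x_0+x_4)/x_1$, $z=2xy/x_1$, every denominator is a power of $x_1$, and the equations of $S$ give $x_1^2=(x_3-x_2)(x_3+x_2)=(x_0-x_4)(x_0+x_4)$, so $x_1\ne 0$ forces both numerators to be nonzero. Hence the bad locus is exactly $D=\{x_1x_5=0\}$ and the image is exactly $X\setminus\{xyz=0\}$, which is the clean statement you were missing.

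Two further remarks. First, your worry about ``behaviour at the nodes'' is vacuous: every one of the $16$ nodes of $S$ has $x_1=0$ or $x_5=0$ (and likewise $x_0=0$ or $x_3=0$), so the nodes lie inside the removed divisor in either normalization and never enter the isomorphism; also, injectivity off $D$ needs no separate check once a two-sided regular inverse is exhibited. Second, your proof can be completed without redoing the parametrization: the transposition of the vertices $3$ and $4$ of $K_4$ induces the automorphism $x_0\leftrightarrow x_5$, $x_1\leftrightarrow x_3$, $x_2\mapsto ix_2$, $x_4\mapsto x_4$ of $S$, defined over $\F_p$ because $p\equiv 1\bmod 4$, and it carries $\{x_1x_5=0\}$ onto $\{x_0x_3=0\}$; composing it with your isomorphism yields the lemma exactly as stated. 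As submitted, however, the proposal establishes only the birational half, and the divisor bookkeeping that constitutes the \emph{moreover} clause is left unfinished.
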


\begin{proof} Let us work with the affine surface $S_a$ given by $x_5=1$ which is defined (after substituting $x_2\mapsto ix_2$) by the system 
$$  x_1^2 + x_2^2=x_3^2,\quad x_1^2 + x_4^2 = x_0^2, \quad  x_0^2+x_2^2=1.$$
We use the following rational parameterization of the first two quadrics defining $S$:
$$x_2=(x^2-1)s, \quad x_1=2xs, \quad x_3=(x^2+1)s,$$  
$$x_4=(y^2-1)t, \quad x_1=2yt, \quad x_0=(y^2+1)t.$$
As follows from the middle column, $xt^{-1}=ys^{-1}$, so if we introduce a new variable $z:=ys^{-1}=xt^{-1}$ and express $x_0$ and $x_2$ in terms of $x$, $y$ and $z$, then the equation $x_0^2+x_2^2=1$ reads (after a simplification):
$$z^2=(x^2y^2+1)(x^2+y^2),$$ 
which defines a singular K3  surface $X$ in a $3$-space.

This gives  birational maps 
$$\varphi:X\longrightarrow S, \quad \psi: S\longrightarrow X, \quad \psi=\varphi^{-1}$$
given by
\begin{equation}\label{12}
\begin{split}
& x_0=\frac{(y^2+1)x}{z}, \quad  x_1=\frac{2xy}{z}, \quad x_2=\frac{(x^2-1)y}{z},  \\ 
& x_3=\frac{(x^2+1)y}{z}, \quad  x_4=\frac{(y^2-1)x}{z}, \quad x_5=1 \\
\end{split}
\end{equation}
for $\varphi$ and 
$$x=\frac{x_2+x_3}{x_1 }, \quad y=\frac{x_0+x_4}{x_1}, \quad z=\frac{2(x_2+x_3)(x_0+x_4)}{x_1^3}$$
for $\psi$. These maps establish a bijection 
between $X\setminus D_1$ and $S\setminus D$,
where the divisors $D_1$ and $D$ are given by $\{xyz=0\}$ and  $\{x_1x_5=0\}$, respectively. 
\end{proof} 

Now we shall compare the number of points on $S$ and $X$ in order to get the following result.

\begin{lemma}\label{points} 
For any $p$ we have $|S(\F_p)| = |X(\F_p)| - 1$.
\end{lemma}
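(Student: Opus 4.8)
The plan is to exploit the birational isomorphism of Lemma~\ref{StoX}. That lemma provides a regular isomorphism, defined over $\F_p$, between $S\setminus D$ and $X\setminus D_1$, where $D=S\cap\{x_1x_5=0\}$ and $D_1=X\cap\{xyz=0\}$. Since this is an honest bijection of $\F_p$-points on the complements, it gives
\begin{equation*}
|S(\F_p)|-|X(\F_p)|=|D(\F_p)|-|D_1(\F_p)|,
\end{equation*}
so the whole statement reduces to the elementary assertion $|D_1(\F_p)|=|D(\F_p)|+1$. Both $D$ and $D_1$ are curves (unions of lines and conics), so I would simply compute the two point counts by hand, using throughout that $p\equiv1\bmod4$, so that $i=\sqrt{-1}\in\F_p$ and every binary quadratic form appearing splits.

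First I would count $D_1\subset X$, the union of the three plane sections $\{x=0\}$, $\{y=0\}$, $\{z=0\}$ of $z^2=(x^2y^2+1)(x^2+y^2)$. On $\{x=0\}$ and $\{y=0\}$ the equation degenerates to $z^2=y^2$ (resp.\ $z^2=x^2$), giving $2p-1$ points each; on $\{z=0\}$ one factors $x^2+y^2=(x-iy)(x+iy)$ and $x^2y^2+1=(xy-i)(xy+i)$ and counts the resulting two lines and two hyperbolas by inclusion--exclusion. Assembling the three loci, again by inclusion--exclusion over $\{x=0\}$, $\{y=0\}$, $\{z=0\}$ (whose pairwise and triple intersections all collapse to the single point $(0,0,0)$), I expect $|D_1(\F_p)|=8p-15$.

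Next I would count $D=D'\cup D''$ with $D'=S\cap\{x_1=0\}$ and $D''=S\cap\{x_5=0\}$, working with the affine cone $\widetilde S\subset\A^6$ and dividing by $p-1$ at the end. Setting $x_1=0$ (resp.\ $x_5=0$) turns the three defining rank-$3$ quadrics into one nondegenerate ternary quadric together with two ``sign'' conditions of the form $x_j=\pm(\cdots)$. The surviving ternary quadric is a cone over a split conic and so has exactly $p^2$ affine points; weighting each by the number ($1$ or $2$) of admissible signs for the two remaining coordinates and summing over the strata where coordinates do or do not vanish gives $4p^2-8p+5$ points on each affine cone, hence $4p-4$ projective points on each of $D'$ and $D''$. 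The analogous count on $D'\cap D''=S\cap\{x_1=x_5=0\}$ gives $8$ points, so inclusion--exclusion yields $|D(\F_p)|=(4p-4)+(4p-4)-8=8p-16$.

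Combining the two counts gives $|D(\F_p)|-|D_1(\F_p)|=(8p-16)-(8p-15)=-1$, which is exactly the claim. The main obstacle is purely bookkeeping: the two divisors live on surfaces of different nature (projective $S$ versus affine $X$), so one must consistently pass to affine cones and then projectivize, and one must track with care the strata where coordinates vanish, since these produce the various $\pm1$ multiplicities and the mild over-counting concentrated at the intersection points. The one genuinely conceptual point to check is that none of these counts secretly depends on a finer congruence of $p$ (e.g.\ $p\bmod 8$); this holds precisely because $p\equiv1\bmod4$ makes $\sqrt{-1}$ available, so every quadratic condition that arises splits uniformly.
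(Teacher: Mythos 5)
Your proposal is correct and follows essentially the same route as the paper: reduce via the regular isomorphism of Lemma~\ref{StoX} to comparing $|D(\F_p)|$ and $|D_1(\F_p)|$, then compute both by inclusion--exclusion, obtaining $|D(\F_p)|=2(4p-4)-8=8p-16$ and $|D_1(\F_p)|=(2p-1)+(2p-1)+(4p-11)-2=8p-15$, exactly as in the paper (which also works under the section's standing assumption $p\equiv 1 \bmod 4$). The only cosmetic difference is that you count the components of $D$ by passing to the affine cone and dividing by $p-1$, whereas the paper cites the analogous computation in Lemma~\ref{S0toS}; the resulting numbers agree.
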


\begin{proof} Indeed, $$|S(\F_p)|=\big| \big( S\setminus D\big)(\F_p)\big|+\big| D(\F_p) \big|= \big|\big(X\setminus D_1\big)(\F_p)\big| + \big| D(\F_p) \big|,$$
where $D=\{x_1=0\}\cup \{x_5=0\}$, and $D_1=\{x=0\}\cup\{y=0\}\cup\{z=0\}$.

Similarly to the proof of Lemma \ref{S0toS}, we get that both $E_1:=S\cap\{x_1=0\}$ and $E_2:=S\cap\{x_5=0\}$ consist of $4p-4$ points. 
The intersection 
$E_1\cap E_2$
is defined by 
$$x_2^2=x_3^2,\quad x_4^2 = x_0^2,\quad x_0^2=-x_2^2,$$ 
and consists of $8$ points.
Hence, 
$$|D(\F_p)|=|E_1(\F_p)|+|E_2(\F_p)|-|(E_1\cap E_2)(\F_p)|=2(4p-4)-8=8p-16.$$

Put $D_x=X\cap\{x=0\}$, $D_y=X\cap\{y=0\}$, and $D_z=X\cap\{z=0\}$.
Since $D_z$ is defined by
$$(x^2+y^2)(x^2y^2+1)=0.$$ 
we have 
$$\big| D_z(\F_p) \big|=\big| \{x^2+y^2=0\}(\F_p) \big|+\big| \{x^2y^2=-1\}(\F_p) \big|-\big| \{x^2+y^2=x^2y^2+1=0 \}(\F_p) \big|=$$
$$=2p-1+2(p-1)-8=4p-11.$$

Similarly, $|D_x(\F_p)|=|D_y(\F_p)|=2p-1$, and $D_x\cap D_y=D_x\cap D_z=D_y\cap D_z=D_x\cap D_y\cap D_z$. 
We have
$$|D_1(\F_p)|=|D_x(\F_p)|+|D_y(\F_p)|+|D_z(\F_p)|-2|\{x=y=z\}(\F_p)|=$$
$$=2p-1+2p-1+4p-11-2=8p-15.$$
Hence, 
$$|X(\F_p)|-|S(\F_p)|  =\big| D_1(\F_p) \big|-\big| D(\F_p) \big|= (8p-15)-(8p-16)=1.$$
\end{proof}

 Now we can finish the proof of our main result that leads to the proof of the Goncharova theorem.
 
 \begin{thm}\label{surface-curve}
Consider the affine surface $X$ in a $3$-space given by the equation:
\begin{equation}\label{3} z^2=(x^2y^2+1)(x^2+y^2),\end{equation}
and the affine plane curve $E_a$ given by the equation:
\begin{equation}\label{4} y^2=x^3-x. \end{equation}
 
Let $M_p$ and $N_p$, respectively, denote the number of solutions of \eqref{3} and \eqref{4}  modulo a prime $p$. 
Then $M_p$ and $N_p$ are related as follows:
\begin{equation}\label{5} M_p=(p+1)^2+(N_p-p)^2+1=\left\{
\begin{array}{ll}
		(p+1)^2+J(k)^2+1  & \mbox{if } p=4k+1 \\
		(p+1)^2+1  & \mbox{if } p=4k+3
	\end{array}
\right.. \end{equation}
\end{thm}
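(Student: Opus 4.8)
The plan is to count $M_p$, the number of affine solutions of $z^2=(x^2y^2+1)(x^2+y^2)$, by fibering the surface $X$ over the $(x,y)$-plane and analyzing the resulting family of conic/elliptic problems. For each pair $(x,y)\in\F_p^2$ the number of $z$ solving $z^2=F(x,y)$ is $1+\genfrac(){0.5pt}{0}{F(x,y)}{p}$, where $F(x,y)=(x^2y^2+1)(x^2+y^2)$ and we read the Legendre symbol as $0$ when $F=0$. Hence
\begin{equation}\label{plan-sum}
M_p=\sum_{x,y\in\F_p}\left(1+\genfrac(){0.5pt}{0}{F(x,y)}{p}\right)=p^2+\sum_{x,y\in\F_p}\genfrac(){0.5pt}{0}{(x^2y^2+1)(x^2+y^2)}{p}.
\end{equation}
So everything reduces to evaluating the character sum $\Sigma:=\sum_{x,y}\genfrac(){0.5pt}{0}{(x^2y^2+1)(x^2+y^2)}{p}$, and I would aim to show $\Sigma=2p+1+(N_p-p)^2$, which combined with \eqref{plan-sum} gives the claimed $M_p=(p+1)^2+(N_p-p)^2+1$.

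To evaluate $\Sigma$ I would perform the inner sum over $y$ for fixed $x$, treating $F$ as a quadratic form in $y^2$. A cleaner route is to substitute $u=xy$ so that the two factors become $(u^2+1)$ and $(x^2+y^2)$; or, better, to diagonalize by noting the product structure. I expect the key identity to come from recognizing that the surface $X$ is a Kummer-type surface, so that after a suitable change of variables the relevant character sum factors as a near-square of the elliptic sum $a_0(p)=\sum_x\genfrac(){0.5pt}{0}{x^3-x}{p}=p-N_p$. The natural mechanism is that the double sum $\sum_{x,y}\genfrac(){0.5pt}{0}{G(x)G(y)}{p}$ over a suitable factorization gives $\left(\sum_x\genfrac(){0.5pt}{0}{G(x)}{p}\right)^2$; isolating such a separated-variable piece, plus correction terms counted by degenerate fibers, should produce the $(N_p-p)^2$ summand together with the lower-order constants. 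I would identify the CM elliptic curve $E_0:y^2=x^3-x$ (as in Subsection \ref{cur}, where $a_0(p)=J(k)$ for $p=4k+1$ and $a_0(p)=0$ for $p=4k+3$) as the source of this term, which immediately yields the case split in \eqref{5} since $N_p-p=-a_0(p)=-J(k)$ or $0$.

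The main obstacle will be the bookkeeping in reducing the genuinely two-variable sum $\Sigma$ to a product of one-variable sums. The factors $x^2y^2+1$ and $x^2+y^2$ do not separate directly, so I would look for a rational change of coordinates (suggested by the Kummer structure noted in subsection \ref{alt}) that untangles them, at the cost of excluding a divisor where the transformation degenerates; the excluded locus must then be counted separately and will contribute the constant terms. Concretely, I expect to write $x^2+y^2=(x+iy)(x-iy)$ over $\F_p$ in the split case $p=4k+1$, reducing $\Sigma$ to sums of the type $\sum\genfrac(){0.5pt}{0}{(\cdots)}{p}$ that are Jacobsthal-style sums attached to $E_0$; the non-split case $p=4k+3$ forces $i\notin\F_p$, which is exactly why the $J(k)^2$ term drops. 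Once $\Sigma$ is expressed through $a_0(p)^2$ plus explicitly enumerable degenerate contributions, the theorem follows by arithmetic, and $|S(\F_p)|=(p+1)^2+J(k)^2$ of \eqref{2} is recovered via Lemma \ref{points} since $M_p=|X(\F_p)|=|S(\F_p)|+1$.
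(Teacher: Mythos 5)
Your opening reduction is fine: with the convention $\genfrac(){0.5pt}{0}{0}{p}=0$ one indeed has $M_p=p^2+\Sigma$ with $\Sigma=\sum_{x,y}\genfrac(){0.5pt}{0}{(x^2y^2+1)(x^2+y^2)}{p}$. But note an arithmetic slip already at the level of the target: to get $M_p=(p+1)^2+(N_p-p)^2+1$ you need $\Sigma=2p+2+(N_p-p)^2$; your stated target $\Sigma=2p+1+(N_p-p)^2$ would give $M_p=(p+1)^2+(N_p-p)^2$, missing the final $+1$.

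The genuine gap, however, is that the evaluation of $\Sigma$ --- which is the entire content of the theorem --- is deferred to a mechanism that cannot work as you describe it. You propose to find a rational change of coordinates separating the two factors, so that the double sum becomes $\bigl(\sum_x\genfrac(){0.5pt}{0}{G(x)}{p}\bigr)^2$ up to corrections coming from a divisor where the transformation degenerates. But $X$ is not related to a separated-variables surface by any such coordinate change: the separated surface $u^2=(r^4+1)(v^4+1)$ (the Kummer surface of $E_1\times E_1$, where $E_1:u^2=s^4+1$) dominates the birational model $X'\colon y_1^2=(t^2x_1^4+1)(t^2+1)$ of $X$ only via the substitution $t=v^2$, $r=vx_1$, i.e.\ via a generically $2$-to-$1$ (and from $E_1\times E_1$, $4$-to-$1$) quotient map, not a birational one. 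Because of this, $\F_p$-points of $X'$ do not lift to $\F_p$-points of the separated surface in general: each point of $X'\setminus X'_0$ lifts to points of exactly one of the four $(\Z/2\Z)^2$-twists $u^2=a(b^2s^4+1)$ of the cover, and the correct identity is $\bigl|[X'\setminus X'_0](\F_p)\bigr|=\frac14\sum_{i=1}^4\bigl|E_i^{\circ}(\F_p)\bigr|^2$, a sum over the four twists --- not a single square plus divisor corrections. (Your alternative substitution $u=xy$ runs into the same phenomenon: it yields $\sum_u\genfrac(){0.5pt}{0}{u^2+1}{p}\sum_x\genfrac(){0.5pt}{0}{x^4+u^2}{p}$, and the inner sum separates only after sorting $u$ into squares and non-squares, which is again the four-twist analysis.) This descent/twisting step --- in the paper, carried out by fibering over $t$ and sorting fibers by the quadratic-residue pattern of $(t,t^2+1)$ --- is the missing idea, and it must be followed by a second step you also elide: proving that the four twists have Frobenius traces $\pm a$ with $a^2=(N_p-p)^2$, which uses that all of these curves have CM by $\Q(i)$, plus the Jacobsthal relation $J(k)=p-N_p$. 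Without these two steps, your ``bookkeeping'' paragraph is not an outline of a proof but a restatement of what is to be proved. (A direct character-sum proof in the spirit of your plan does exist --- it is Ustinov's elementary argument cited in the paper --- but it proceeds through genuine Jacobsthal-sum identities rather than through separation of variables by a coordinate change.)
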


\begin{proof}
Putting $x=x_1$, $y=tx_1$, $z=y_1x_1$ we  replace the surface $X$ by the surface $X'$ given   by the equation:
       
\begin{equation}\label{6} y_1^2=(t^2x_1^4+1)(t^2+1). \end{equation}
Here we regard $x_1$, $y_1$ and $t$ as coordinates in an affine $3$-space so that (\ref{6}) defines a surface. 
Later we will treat $t$ as a parameter, and (\ref{6}) will define a genus 1 plane curve.

It is easy to check that 
$$|X(\F_p)|=|X'(\F_p)|+p.$$
We now count separately points on $X'\setminus X'_0$ and on $X'_0$ where $X'_0:= X'\cap (\{x_1=0\}\cup\{y_1=0\}\cup\{t=0\})$.

By inclusion--exclusion formula we get that if $p\equiv 1 \pmod 4$, then
$$|X'_0(\F_p)|=|[X'\cap\{x_1=0\}](\F_p)|\ + \ |[X'\cap\{y_1=0\}](\F_p)| \ + \ |[X'\cap\{t=0\}](\F_p)| \ -$$
$$- \ |[X'\cap\{x_1=y_1=0\}](\F_p)| \ - \ |[X'\cap\{x_1=t=0\}](\F_p)| \ =$$
$$= \ (p-1)+(4p-10)+2p-2-2 \ = \ 7p-15$$ 
Here we used that $X'\cap\{y_1=t=0\}$ is empty.

To count points on $X'\setminus X'_0$ we regard $t$ as a parameter.
If $t$ is fixed then equation \eqref{6} defines an elliptic curve $X_t\subset X'$. 
The number of points on such a curve is uniquely determined by the quadratic residue pattern formed by $(t, t^2+1)$. 
Moreover, if the pattern is fixed, the corresponding fibers $X_t$ are isomorphic to each other over $\F_p$.
Hence, there are four cases to consider: $R R$, $R N$, $N R$, $N N$.
There are four elliptic curves $E_1$, $E_2$, $E_3$, $E_4$, respectively. 
For instance, if $(t,t^2+1)$ has pattern $RR$ then $t=s^2$ and $t^2+1=u^2$ where
$(s,u)$ is a point with non-zero coordinates on the plane curve $E_1$ given by the equation $u^2=s^4+1$.
Clearly, the points $(\pm s,\pm u)$  correspond to the same value of $t$.
Hence, the number of values $t$ such that the pair $(t,t^2+1)$ has 
pattern $RR$ is equal to $\frac{1}{4}|E^\circ_1(\F_p)|$ where $E^\circ_1$ denotes $E_1\setminus(\{s=0\}\cup\{u=0\})$.
 
It is interesting that for all values of $t$ parameterized by $E_i(\F_p)$ the corresponding elliptic curve $X_t$ is isomorphic to $E_i$ over $\F_p$.
For instance, if $t$ and $t^2+1$ are quadratic residues, then $X_t$ is isomorphic to the curve $y^2=x^4+1$ by the change of variables $x=sx_1$, $y=u^{-1}y_1$.
The same holds in the other three cases.

Hence, we get the following identity:
\begin{equation}\label{8}
    \left|[X'\setminus X'_0](\F_p)\right|=\frac{1}{4}\sum_{i=1}^{4}\left|E^\circ_i(\F_p)\right|^2 
\end{equation}

Let $a$ be the trace of the elliptic curve $y^2=x^4-1$ over $\F_p$. Then for all quadratic twists of this curve the trace equals $\pm a$.

Using Table 1 we see that for $p=8k\pm1$ the right hand side is equal to
$$\frac{1}{4}[(p-7+a)^2+2(p-3-a)^2+(p+1+a)^2]=p^2-6p+17+a^2.$$
The computation with Table 2 for $p=8k\pm3$ yields the same answer.
Here we use that $E_2$ is the quadratic twist of $E_1$, and $E_4$ is the quadratic twist of $E_3$, hence, their Frobenius traces have opposite signs. 
Note also that the traces of $E$, $E_1$ and $E_3$ are equal up to a sign since
all curves have the CM field $\Q(i)$ with the class number $h=1$ (the ring of integers $\Z[i]$ is a PID).
A direct calculation of $|E_1(\F_p)|$ and $|E_3(\F_p)|$ for $p=5$ shows that their traces have opposite signs.
Hence, if $a$ denotes the trace of $E_1$, then the traces of $E_2$, $E_3$ and  $E_4$ are equal to $-a$, 
$-a$ and $a$, respectively. 
Since the trace of $E$ is equal to $(N_p-p)$, we also have 
$$a^2=(N_p-p)^2.$$

\begin{table}[h]
		\centering
\begin{tabular}{|l|c|c|c|c|} \hline\hline
{\em Curve} & \# points at $\infty$ & \# points at $u=0$ or $s=0$ & sum & trace of Frobenius\\ \cline{2-5}
$u^2=s^4+1$ & 2 & 6 & 8 & $a$\\
\cline{2-5}
$\delta u^2=s^4+1$ & 0 & 4 & 4 & $-a$\\
\cline{2-5}
$u^2=\delta^2s^4+1$ & 2 & 2 & 4 & $-a$\\
\cline{2-5}
$\delta u^2=\delta^2s^4+1$ & 0 & 0 & 0 & $a$\\
\hline\hline
\end{tabular}
\medskip

\caption{Case $p=8k\pm1$}
\end{table}

Here and below $\delta\in\F_p^*$ denotes a quadratic non-residue.

\begin{table}[h]
		\centering
\begin{tabular}{|l|c|c|c|c|} \hline\hline
{\em Curve} & \# points at $\infty$ & \# points at $u=0$ or $s=0$ & sum & trace of Frobenius\\ \cline{2-5}
$u^2=s^4+1$ & 2 & 2 & 4 & $a$\\ 
\cline{2-5}
$\delta u^2=s^4+1$ & 0 & 0 & 0 & $-a$\\
\cline{2-5}
$u^2=\delta^2s^4+1$ & 2 & 6 & 8 & $-a$\\
\cline{2-5}
$\delta u^2=\delta^2s^4+1$ & 0 & 4 & 4 & $a$\\
\hline\hline
\end{tabular}
\medskip

\caption{Case $p=8k\pm3$}
\end{table}

Let $E_0$ be the curve given by $y^2=x^3-x$, and $a_0$ be its trace. 
This curve enjoys complex multiplication by $\Q(i)$, just as the curve $y^2=x^4-1$. Therefore, they are isogenous over $\F_{p^2}$, and $a^2=a_0^2$. If $p=4k+3$, the curve $E$ is supersingular, and thus $a=a_0=0$. For $p=4k+1$, a direct calculation shows that for $p=8k+1$ they are even isomorphic over $\F_{p}$, and $a=a_0$, whence for $p=8k+5$ they are isomorphic only over $\F_{p^2}$, and $a=-a_0$.

To prove the first equality in \eqref{5} we combine the above formulas involving $X$, $X'$ and $X_0$. 
We get
$$|X(\F_p)|=|X'(\F_p)|+p=|[X'\setminus X'_0](\F_p)|+|X_0'(\F_p)|+p=$$
$$=|[X'\setminus X'_0](\F_p)|+(7p-15)+p=(p^2-6p+17+a^2)+(7p-15)+p$$
$$=(p+1)^2+a^2+1=(p+1)^2+(N_p-p)^2+1.$$

Now let us prove the second equality in \eqref{5}.

The curve $E_0:y^2=x^3-x$ is isomorphic to $E_1 : y^2=x(x+1)(x+2)$. 
Thus, Jacobstahl's sum for $p=4k+1$ is related to $N_p$ as follows:
$$J(k)=\sum_{a\in\F_p}\genfrac(){0.5pt}{0}{x(x+1)(x+2) }{p}=a_0=p-N_p,$$
$a_0$ being the trace of $E_0$, and thus of $E_1$, and $N_p$ as above being the number of $\F_p$-points on the affine curve $y^2=x^3-x$.
\end{proof}

Recently, Alexey Ustinov (HSE University) found a short elementary proof of Theorem \ref{surface-curve} via algebraic manipulations with Jacobstahl's sums \cite{U}.

And now the last 
\begin{lemma}\label{equiv} Formula \eqref{2} is equivalent to formula \eqref{5}. \end{lemma}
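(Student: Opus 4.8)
The plan is to treat this equivalence as a pure bookkeeping step: all of the substantive point-counting has already been carried out in Lemma~\ref{points} and Theorem~\ref{surface-curve}, so what remains is only to match up the two formulas. First I would record the tautological identification $M_p=|X(\F_p)|$. Indeed, the affine surface $X$ of Lemma~\ref{StoX} is defined by equation~\eqref{double}, which is literally equation~\eqref{3} whose number of solutions is, by definition, the quantity $M_p$ appearing in Theorem~\ref{surface-curve}.

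Next I would invoke Lemma~\ref{points}, which asserts $|S(\F_p)|=|X(\F_p)|-1$ for every prime $p$, so that $|S(\F_p)|=M_p-1$. Since throughout this second part we are in the regime $p=4k+1$, only the first branch of~\eqref{5} is relevant, and there it reads $M_p=(p+1)^2+J(k)^2+1$; recall that this branch is exactly where the identity $(N_p-p)^2=J(k)^2$ was established inside the proof of Theorem~\ref{surface-curve}, via $J(k)=p-N_p$.

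Combining these two facts, formula~\eqref{2}, namely $|S(\F_p)|=(p+1)^2+J(k)^2$, is equivalent to $M_p-1=(p+1)^2+J(k)^2$, i.e.\ to $M_p=(p+1)^2+J(k)^2+1$, which is precisely the $p=4k+1$ branch of~\eqref{5}. Each step in this chain is an equality rather than an inequality, so every implication is reversible and the two formulas are indeed equivalent. I expect no genuine obstacle here: the only point demanding care is to track the off-by-one shift coming from Lemma~\ref{points} together with the identification $M_p=|X(\F_p)|$, so as to confirm that the constant $+1$ carried by~\eqref{5} is exactly the constant absorbed in passing to~\eqref{2}. Once this is verified, Lemma~\ref{LidatoS} turns~\eqref{2} into the desired formula~\eqref{main}, completing the proof of the Goncharova theorem for $\Gamma=K_4$.
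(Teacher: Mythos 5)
Your proposal is correct and follows essentially the same route as the paper: both proofs simply chain Lemma~\ref{points} ($|S(\F_p)|=|X(\F_p)|-1=M_p-1$) with the $p=4k+1$ branch of formula~\eqref{5} to match the off-by-one constant, so that $|S(\F_p)|=(p+1)^2+J(k)^2$ is exactly formula~\eqref{2}. Your write-up is merely more explicit than the paper's one-line computation about the tautology $M_p=|X(\F_p)|$ and the reversibility of each step.
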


\begin{proof}
 
Lemma \ref{points} and \eqref{5} give
$$\vert S(\F_p)\vert = M_p -1 = (p+1)^2+J(k)^2+1 -1=   (p+1)^2+J(k)^2.$$
\end{proof}

Now we are ready to prove the Goncharova Theorem.
\vskip 0.2 cm
{\em{Proof of Theorem {\rm{\ref{t.main}}}}.}

Just combine Theorem \ref{surface-curve} with Lemmas \ref{LidatoS} and \ref{equiv}. The second (difficult) part of the Goncharova Theorem \ref{t.main} concerning the complete graph $K_4$ follows immediately. The case of other graphs is much easier, the corresponding surfaces being rational, and  we leave its proof to the reader. 
\qed

\subsection{Alternative method}\label{alt}
One can look at the surface $X$ from another point of view, which is less elementary and more geometric, and leads to an alternative proof of formula \eqref{8}. We keep the notation from the proof of Theorem \ref{surface-curve}.   

Let $E_1$ be given by $u^2=t^4+1$. The associate Kummer surface $K=\mathrm{Kum}(E_1\times E_1)$ is isomorphic to $u^2=(r^4+1)(v^4+1).$

 Look now at the   equation \eqref{6}, i.e.,  $y^2=(t^2x_1^4+1)(t^2+1)$, which is a singular K3 surface $X’$. 
 It is birational to $X$ and to the intersection of three quadrics $S$. Write $u=y$, $t=v^2$ and $r=vx_1.$ This gives a dominant rational map of degree 2 from $K$ to $X’$. So one has a dominant rational map of degree 4 from $E_1 \times E_1$ to $X’$. Generically it’s a Galois cover with group $(\Z/2\Z)^2$ given by extracting square roots of $t$ and $t^2+1$. Geometrically, this map is the quotient map with respect to the action of $(\Z/2\Z)^2$, where one generator is the antipodal involution $(P,Q) \mapsto (-P,-Q)$ and the other generator is the translation $(P,Q)\mapsto (P+R,Q+R)$ by a point $R$ of order 2.

 Therefore one has a $(\Z/2\Z)^2$-torsor $E_1^{\circ}\times E_1^{\circ}\to X'\setminus X'_0$ given by simultaneously changing the sign of $u$ and $t$ on both copies of $E_1$.

 We can twist this torsor by a pair of elements of $\F_p^*.$ Up to isomorphism, this depends only on the classes modulo squares, and since   $\F_p^*/\F_p^{*2} \simeq \Z/2\Z$, we get four elliptic curves $E_i$ given by $u^2=a(b^2t^4+1),$ where $a$ and $b$ are representatives of cosets of $\F_p^*\mod \F_p^{*2}$. Any $\F_p$-point of $X'\setminus X'_0$ lifts to four distinct points on $E_i\times E_i$ for exactly one $i,$ which implies  formula \eqref{8}.

{\bf\em Remarks.} 

1) The above argument shows also that $X'$ is birational to the Kummer surface of the Abelian surface which is the quotient of $E_1\times E_1$ by the subgroup of order 2 generated by $(R,R)$, where $R$ is a point of order 2 on $E_1$. The translation by $R$ sends $(u,t)$ to $(-u,-t).$

2) Note also that $E_1$ is isomorphic to $y^2=x^3-4x$ over $\Q$, which is isomorphic to $y^2=2(x^3-x),$ so $E_1$  is the quadratic twist by 2 of $E_0:y^2=x^3-x.$ 
One verifies that the (elliptic) involution of $E_1$ that changes the sign of $u$ and preserves $t$ is the same as the involution that changes the sign of $y$ and preserves $x,$ so $K$ is also isomorphic to $y^2=(x^3-x)(z^3-z)$.

\end{document}